\newcommand{\can}{\overline{\phantom{x}}}
\newtheorem{dummy}{Dummy}
\newtheorem{lemma}[dummy]{Lemma}
\newtheorem{theorem}[dummy]{Theorem}
\newtheorem{proposition}[dummy]{Proposition}
\newtheorem{corollary}[dummy]{Corollary}
\theoremstyle{definition}
\newtheorem{example}[dummy]{Example}
\newcommand{\ignore}[1]{}
\author{Adam Owen}
\author{Susanne Pumpl\"un}
\email{owena004@gmail.com; pmzsp@nottingham.ac.uk}
\address{School of Mathematical Sciences\\
University of Nottingham\\ University Park\\ Nottingham NG7 2RD\\
United Kingdom }
\keywords{Skew polynomial ring, reducible skew polynomials, eigenspace,  nonassociative algebra.}
\subjclass[2010]{Primary: 17A35; Secondary: 17A60, 17A36, 16S36}
\begin{document}

\title[The eigenspaces of twisted polynomials over cyclic field extensions]
{The eigenspaces of twisted polynomials over cyclic field extensions}

\begin{abstract}
Let $K$ be a field and $\sigma$ an automorphism of $K$ of order $n$.
 Employing a nonassociative algebra, we study the eigenspace of a bounded skew polynomial $f\in K[t;\sigma]$. We mainly treat the case that $K/F$ is a cyclic field extension  of degree $n$ with Galois group generated by $\sigma$.
  We obtain lower bounds on the dimension of the eigenspace, and compute it in special cases as a quotient algebra.
  Conditions under which a monic polynomial $f\in F[t]\subset K[t;\sigma]$ is reducible are obtained in special cases.
\end{abstract}

\maketitle

\section*{Introduction}

Let $D$ be a unital associative division ring and $R=D[t;\sigma,\delta]$ be a
skew polynomial ring, where $\sigma$ is an automorphism of $D$ and $\delta$ a left $\sigma$-derivation. Let $f\in R$ be a skew polynomial of degree $m>1$. The associative algebra
 $E(f)=\{g\in R\,|\, {\rm deg}(g)<m \text{ and }fg\in Rf\}$ is the \emph{eigenspace} of $f$.
 If $f$ is a bounded polynomial, then the nontrivial zero divisors in the eigenspace are in one-to-one correspondence with the irreducible factors of $f$ in $D[t;\sigma,\delta]$,  cf. for instance
 \cite{GLN13}. Therefore eigenspaces of skew polynomials regularly appear whenever skew polynomials are factorized, e.g.
in results on computational aspects of operator algebras, or in algorithms factoring skew polynomials over $\mathbb{F}_q(t)$ or over
$\mathbb{F}_q$, cf. \cite{giesbrecht1998factoring, GZ, GLN, G}.
For skew polynomial rings over local fields of positive characteristic, where the Brauer group is non-trivial, the irreducibility of a skew polynomial is equivalent to understanding a ring isomorphism to a full matrix ring over a field extension of the local field. This problem is not completely solved in the non-split case. Partial results have been obtained e.g. in \cite{G20}.

The eigenspace of $f$ also appears implicitly in classical constructions by  Amitsur  \cite{Am2, Am, Am3}, but was never recognized as the right nucleus of some nonassociative algebra.

In this paper, we investigate eigenspaces using a class of unital nonassociative algebras $S_f$ defined by Petit \cite{petit1966certains, P68}, which canonically generalize the quotient algebras $R/Rf$  obtained when factoring out a right invariant $f\in R$.
 The algebra
$S_f=D[t;\sigma,\delta]/D[t;\sigma,\delta]f$ is defined on the additive subgroup
$\{h\in R\,|\, {\rm deg}(h)<m \}$ of $R$ by using right division by $f$
to define the algebra multiplication via $g\circ h=gh \,\,{\rm mod}_r f $. Petit's algebras  were studied in detail in
\cite{petit1966certains, P68}, and for $K$ a finite  field (hence w.l.o.g. $\delta=0$) in \cite{LS}. Indeed, the algebra $S_f$ with $f(t)=t^2-i\in \mathbb{C}[t;\can]$, $\can$ the complex conjugation, already appeared in \cite{D06}
 as the first example of a nonassociative division algebra.
The right nucleus of $S_f$ is the eigenspace of $f$, if $f$ is not linear.
  Thus  the eigenspace of $f$ is an associative subalgebra of $S_f$.

We concentrate on the case that $R=K[t;\sigma]$, where $K/F$ is a cyclic Galois extension of degree $n$ with Galois group generated by  $\sigma$, and find conditions under which a monic polynomial $f\in R$ is reducible.

 In Section \ref{sec:prel},
we introduce our terminology and some results we need later.
In Section \ref{sec:powers of t} we determine when a power of $t$ lies in the right nucleus.
 This yields some lower bounds on the dimension of the right nucleus as an $F$-vector space.
These bounds can then later be used to decide if certain polynomials $f$ of degree $m$ which are not right invariant
are reducible. Let $f\in R$ be a bounded monic  polynomial that is not right invariant with ${\rm gcrd}(f,t)=1$, and
minimal central left multiple
$h(t)=\hat{h}(t^n)$, $\hat{h}(x) \in F[x]$ monic.
We show that for $f\in F[t]\subset K[t;\sigma]$, the quotient algebra
${\rm Nuc}(S_f)[t;\sigma ]/{\rm Nuc}(S_f)[t;\sigma ]f$
is a subalgebra of ${\rm Nuc}_r(S_f)$ (Theorem \ref{L_f subalgebra of the right nucleus}).
In particular if $f\in F[t]\subset K[t;\sigma]$ is bounded and we have
 ${\rm Nuc}_r(S_f)={\rm Nuc}(S_f)[t;\sigma]/{\rm Nuc}(S_f)[t;\sigma]f,$
 then  $f$ is irreducible in $R$, if and only if $f$ is irreducible in ${\rm Nuc}(S_f)[t;\sigma]$.
 In Section \ref{sec:elements in N} we look at the nucleus of $S_f$ for  $f\in R$.
Since ${\rm Nuc}(S_f) =  {\rm Nuc}_r(S_f) \cap K$, this helps us to understand which elements of $K$ lie in ${\rm Nuc}_r(S_f)$.
 In Section \ref{sec: h reducible} we assume only that $\hat{h}$ is irreducible in $F[x]$ and obtain some partial results for this case as well.
  In Section \ref{sec:low}, we look at the right nucleus of $S_f$ for low degree polynomials in $F[t]\subset K[t;\sigma]$, and in
 Section
\ref{sec:conclusion}, we  summarize for which types of skew polynomials which are not right invariant we can decide if they are reducible using our methods.

Note that cyclotomic extensions where $F=\mathbb{Q}$ and $K=\mathbb{Q}(\eta)$, with $\eta$ a primitive $p^n$th root of unity and $p$ prime, which have Galois group $\mathbb{Z}/p^n\mathbb{Z}$, and Kummer extensions $K=F(\sqrt[r]{a})$ of $F$, where $F$ contains a primitive $r$th root of unity $\mu$ and
$\sigma(\sqrt[r]{a})=\mu \sqrt[r]{a}$, are examples of skew polynomial rings that are employed in coding theory (e.g. in space-time block coding or for certain linear codes), where both reducible and irreducible $f$ are needed.

 This work is part of the first author's PhD thesis \cite{AO} written under the supervision of the second author.
 For more general results on eigenspaces of skew polynomials $f \in D[t;\sigma,\delta]$ the reader is referred to \cite{AO}.

%
%

\section{Preliminaries} \label{sec:prel}


\subsection{Nonassociative algebras} \label{subsec:nonassalgs}


Let $F$ be a field and let $A$ be an $F$-vector space. $A$ is an
\emph{algebra} over $F$ if there exists an $F$-bilinear map $A\times
A\to A$, $(x,y) \mapsto x \cdot y$, denoted simply by juxtaposition
$xy$, the  \emph{multiplication} of $A$. An algebra $A$ is
\emph{unital} if there is an element in $A$, denoted by 1, such that
$1x=x1=x$ for all $x\in A$. We will only consider unital algebras.

Associativity in $A$ is measured by the {\it associator} $[x, y, z] =
(xy) z - x (yz)$. The {\it left nucleus} of $A$ is defined as ${\rm
Nuc}_l(A) = \{ x \in A \, \vert \, [x, A, A]  = 0 \}$, the {\it
middle nucleus} of $A$ is ${\rm Nuc}_m(A) = \{ x \in A \, \vert \,
[A, x, A]  = 0 \}$ and  the {\it right nucleus} of $A$ is defined as
${\rm Nuc}_r(A) = \{ x \in A \, \vert \, [A,A, x]  = 0 \}$. ${\rm
Nuc}_l(A)$, ${\rm Nuc}_m(A)$, and ${\rm Nuc}_r(A)$ are associative
subalgebras of $A$. Their intersection
 ${\rm Nuc}(A) = \{ x \in A \, \vert \, [x, A, A] = [A, x, A] = [A,A, x] = 0 \}$ is the {\it nucleus} of $A$.
${\rm Nuc}(A)$ is an associative subalgebra of $A$ containing $F$
and $x(yz) = (xy)z$ whenever one of the elements $x, y, z$ lies in
${\rm Nuc}(A)$. Commutativity in $A$ is measured by the {\it commutator} $[x,y] = xy-yx$. The subspace of $A$ defined by ${\rm Comm}(A) = \{ x \in A : [x,y] = 0 \text{ for all } y \in A \}$ is called the {\it commutator} of $A$. The
 {\it center} of $A$ is $C(A)= {\rm Nuc}(A) \cap {\rm Comm}(A)$.

An $F$-algebra $A\not=0$ is called a \emph{division algebra} if for any
$a\in A$, $a\not=0$, both the left multiplication  with $a$, $L_a(x)=ax$,
and the right multiplication with $a$, $R_a(x)=xa$, are bijective.
If $A$ has finite dimension over $F$, $A$ is a division algebra if
and only if $A$ has no zero divisors \cite[ pp. 15, 16]{Sch}.


\subsection{Twisted polynomial rings $K[t;\sigma]$}


Let $K$ be a field and $\sigma$ an automorphism of $K$ with fixed field $F={\rm Fix}(\sigma) = \{ a \in K : \sigma(a)=a \}$.
 The \emph{twisted polynomial ring} $R=K[t;\sigma]$
is the set of polynomials $a_0+a_1t+\dots +a_nt^n$ with $a_i\in K$,
where addition is defined term-wise and multiplication by $ta=\sigma(a)t $ for all $a\in K$.
For $f=a_0+a_1t+\dots +a_nt^n$ with $a_n\not=0$ define the {\it degree} of $f$ to be ${\rm
deg}(f)=n$, by convention ${\rm deg}(0)=-\infty$. Then ${\rm deg}(fg)={\rm deg}
(f)+{\rm deg}(g).$
 An element $f\in R$ of degree $m$ is \emph{irreducible} in $R$ if it is not a unit and  it has no proper factors,
 i.e if there do not exist $g,h\in R$ such that ${\rm deg}(g),{\rm deg} (h)<{\rm deg}(f)$ and $f=gh$.

$R$ is a left and right principal ideal domain and there is a right division algorithm in $R$: for all
$g,f\in R$, $g\not=0$, there exist unique $q,r\in R$ with ${\rm deg}(r)<{\rm deg}(f)$, such that $g=qf+r$ \cite[p.~3 and Proposition 1.1.14]{J96}.

 A twisted polynomial  $f \in R$ is \emph{bounded} if there exists a nonzero polynomial $f^* \in R$,  such that $Rf^*$ is the largest two-sided ideal of $R$ contained in $Rf^*$. The polynomial $f^*$ is uniquely determined by $f$ up to scalar multiplication by elements in $K^\times$. $f^*$ is called the \emph{bound} of $f$.
 The {\it left idealiser} of $f\in R$ is the set $I(f) = \lbrace g \in R \,|\, fg \in Rf \rbrace,$ which is the largest subring of $R$ within which $Rf$ is a two-sided ideal. The {\it eigenspace} of $f$ is the quotient ring $E(f) = I(f)/Rf= \lbrace g \in R \,|\, {\rm deg}(g) < m \text{ and } fg \in Rf \rbrace.$


\subsection{Nonassociative algebras obtained from twisted polynomial rings} \label{subsec:structure}


From now on, let $f \in R$ have positive degree $m$, and for $g \in R$ let $g\, {\rm mod}_r f$ denote the remainder of $g$ upon right division by $f$. The set $\{g\in R\,|\, {\rm deg}(g)<m\}$ endowed with the usual term-wise addition of polynomials and the multiplication $g \circ h = gh \, {\rm mod}_r f$ is a unital nonassociative ring $S_f$. We usually will simply use juxtaposition for the multiplication in $S_f$. $S_f$ is a unital nonassociative algebra over
$F_0 = \{ a \in K \,|\, ag = ga \text{ for all } g \in S_f \} = {\rm Comm}(S_f) \cap K.$
$F_0$ is a subfield of $K$ \cite{petit1966certains}. $S_f$ is called a {\it Petit algebra}. It can be easily seen that $F_0={\rm Fix}(\sigma)$, see \cite[pg.~6]{brown2018}. For all $a \in K^{\times}$ we have $S_f = S_{af}$, and if $f\in R$ has degree 1 then $S_f \cong K$.
In the following, we thus assume that $f$ is monic and that it has degree $m\geq 2$, unless specifically mentioned otherwise.
 $S_f$ is associative if and only if $f$ is \emph{right invariant}, i.e. $Rf$ a two-sided ideal in $R$.
In that case, $S_f$ is equal to the classical associative quotient algebra $R/(f)$. Note that  $f(t) = t^m - \sum\limits_{i=0}^{m-1}a_it^i \in R$ is right invariant if and only if $a_i \in F$ and $\sigma^m(d)a_i = a_i\sigma^i(d)$ for all $i \in \lbrace 0,1,\dots,m-1 \rbrace$ and for all $d \in K$
\cite[(15)]{petit1966certains}. In other words,
 $f$ is right invariant in $R$ if and only if $f(t)=g(t)t^n$ for some $g \in C(R)$ and some
integer $n\geq 0$  \cite[Theorem 1.1.22]{J96}.\\
If $S_f$ is not associative then ${\rm Nuc}_l(S_f)={\rm Nuc}_m(S_f)=K$
and $C(S_f)=F$.   Moreover,
$${\rm Nuc}_r(S_f)=\{g\in R\,|\, {\rm deg}(g)<m \text{ and }fg\in Rf\}.$$
 is the eigenspace of $f\in R$ \cite{petit1966certains}.

$S_f$ is a division algebra, if and only if $f$ is irreducible, if and only if ${\rm Nuc}_r(S_f)$ is a division algebra.  It is well known that each nontrivial zero divisor $q$ of $f$ in ${\rm Nuc}_r(S_f)$ gives a proper factor ${\rm gcrd}(q,f)$ of $f$, e.g. see \cite{GLN13}, where ${\rm gcrd}(q,f)$ denotes the greatest common right divisor of $q$ and $f$ in $R$.\\
If $f(t) \in F[t]\subset R$,
 then $F[t]/(f)$ is a commutative subring of ${\rm Nuc}_r(S_f),$ and a field extension of $F$ of degree $m$ if $f$ is irreducible as a polynomial in $F[t]$
 \cite[Proposition 2]{brown2018nonassociative}.


\subsection{The right nucleus of $S_f$ for irreducible $f$}\label{sec:first main results}


Throughout this section we assume that $\sigma$ has finite order $n>1$. Then $R$ has center $C(R) = F[t^n]\cong F[x]$, where $x=t^n$ \cite[Theorem 1.1.22]{J96}. \\
For any bounded $f \in R$ we  define the \emph{minimal central left multiple of $f$ in $R$} as the unique  polynomial of minimal degree $h \in F[t^n]$ such that $h = gf$ for some $g \in R$, and such that $h(t)=\hat{h}(t^n)$ for some monic $\hat{h} \in F[x]$. If the greatest common right divisor ${\rm gcrd}(f,t)$ of $f$ and $t$ is one, then $f^*\in C(R)$ \cite[Lemma 2.11]{GLN13}), and the minimal central left multiple of $f$ equals $f^*$ up to a scalar multiple from $K^\times$. From now on we therefore assume that $f$ is bounded with  $${\rm gcrd}(f,t)=1$$ and denote the minimal central left multiple  of $f$ by $h(t)=\hat{h}(t^n)$ with $\hat{h}(x) \in F[x]$ monic.

If $f$ is irreducible in $R$, then $\hat{h}(x)$ is irreducible in $F[x]$.
If $\hat{h}$ is irreducible in $F[x]$, then $h$ generates a maximal two-sided ideal $Rh$ in $R$ \cite[p.~16]{J96} and $f=f_1\cdots f_r$ for irreducible $f_i\in R$ such that $f_i\sim f_j$ for all $i,j$ (for a proof see \cite{TP21} or \cite{AO}).

The quotient algebra $R/Rh$ has the commutative $F$-algebra $C(R/Rh) \cong F[x]/ (\hat{h}(x))$ of dimension $deg(\hat{h})$ over $F$ as its center, cf. \cite[Lemma 4.2]{GLN13}.
Define $E_{\hat{h}}=F[x]/(\hat{h}(x))$. If $\hat{h}$ is irreducible in $F[x]$, then $E_{\hat{h}}$ is a field extension of $F$ of degree ${\rm deg}(\hat{h})$.

In \cite{LS}, Lavrauw and Sheekey determine the size of the right nucleus of $S_f$ for irreducible $f\in\mathbb{F}_{q^n}[t;\sigma]$, where $F=\mathbb{F}_q$ with $q=p^e$ for some prime $p$ and integer $e$.
  In this setting, $S_f$ is a  semifield of order $q^{mn}$ whenever $f$ is irreducible and not right invariant, and $|{\rm Nuc}_r(S_f)|=q^m$  \cite[Lemma 4]{LS}. This result generalizes as follows:

\begin{theorem}\label{thm:main2} (for the proof, cf. \cite{AO} or \cite{TP21})
Suppose that $f$ is irreducible. Let $k$ be the number of irreducible factors of $h$ in $R$.
 \\ (i) ${\rm Nuc}_r(S_f)$ is a central division algebra over $E_{\hat{h}}$ of degree $s=n/k$, and
 $$ R/Rh \cong M_k({\rm Nuc}_r(S_f)).$$
 In particular, this means that ${\rm deg}(\hat{h})=\frac{m}{s}$, ${\rm deg}(h)=\frac{nm}{s}$, and
 $[{\rm Nuc}_r(S_f) :F]=ms.$
 Moreover, $s$ divides $m$.
 \\
 (ii) If $n$ is prime and $f$ not right invariant, then ${\rm Nuc}_r(S_f)\cong E_{\hat{h}}.$
In particular, then $[{\rm Nuc}_r(S_f) :F]=m$, ${\rm deg}(\hat{h})=m$, and ${\rm deg}(h)=mn$.
\end{theorem}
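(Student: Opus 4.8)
The plan is to exploit the hypothesis that $\hat h$ is irreducible in $F[x]$, which by the facts recalled above makes $Rh$ a maximal two-sided ideal of $R$. Hence $A := R/Rh$ is a simple ring. It is finite-dimensional over $F$: as a left $K$-space it has dimension $\deg_t h = n\deg(\hat h)$, and since $[K:F]=n$ we get $\dim_F A = n^2\deg(\hat h)$. Thus $A$ is Artinian, and its centre is the field $E_{\hat h}$ of $F$-dimension $\deg(\hat h)$, so $\dim_{E_{\hat h}} A = n^2$ and $A$ is a central simple $E_{\hat h}$-algebra of degree $n$. By the Artin--Wedderburn theorem $A \cong M_{k'}(D)$ for a central division $E_{\hat h}$-algebra $D$ of some degree $s$, and comparing degrees over $E_{\hat h}$ gives $n = k's$, i.e. $s = n/k'$.

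Next I would identify ${\rm Nuc}_r(S_f) = E(f)$ with $D$. Since $f$ is irreducible, $Rf$ is a maximal left ideal and $R/Rf$ is a simple left $R$-module; because $h$ is central and $f$ is a right divisor of $h$, the ideal $Rh$ annihilates $R/Rf$, so $R/Rf$ is naturally a simple module over $A \cong M_{k'}(D)$ and hence isomorphic to the unique simple $A$-module $D^{k'}$. The standard eigenring identification gives $E(f) \cong {\rm End}_R(R/Rf)^{\mathrm{op}}$ (cf. \cite{GLN13}), and since the $R$-action factors through $A$ we have ${\rm End}_R(R/Rf) = {\rm End}_A(D^{k'}) \cong D^{\mathrm{op}}$ by Schur's lemma and Wedderburn, so $E(f) \cong (D^{\mathrm{op}})^{\mathrm{op}} = D$. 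Therefore ${\rm Nuc}_r(S_f)\cong D$ is a central division algebra of degree $s=n/k'$ over $E_{\hat h}$, and $A \cong M_{k'}({\rm Nuc}_r(S_f))$. The one delicate point is the bookkeeping of opposite rings: obtaining ${\rm Nuc}_r(S_f)$ itself, rather than its opposite, inside the matrix decomposition is exactly what pins down the stated isomorphism.

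It then remains to settle the numerics and to check $k'=k$. The number of irreducible factors of $h$ in $R$ equals the composition length of $R/Rh$ as a left module (Jordan--Hölder in the principal ideal domain $R$), which is the composition length of $M_{k'}(D)\cong (D^{k'})^{k'}$, namely $k'$; hence $k'=k$. Counting $F$-dimensions of the simple module two ways, $\dim_F(R/Rf)=nm$ while $\dim_F(D^{k})=k\deg(\hat h)\,s^2$; combined with $n=ks$ this yields $\deg(\hat h)=m/s$, whence $\deg(h)=nm/s$ and $[{\rm Nuc}_r(S_f):F]=\deg(\hat h)\,s^2=ms$, and $s\mid m$ because $\deg(\hat h)=m/s$ is an integer. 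This establishes (i).

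For (ii), with $n$ prime the relation $n=ks$ forces $(k,s)=(n,1)$ or $(1,n)$. If $k=1$ then $A=R/Rh$ is a division ring, so $Rh$ is a maximal left ideal and $h$ is irreducible in $R$; but $f$ is a right divisor of $h$ of degree $m\ge 1$, so $h=gf$ with $g$ a unit, and since $h$ and $f$ are monic we get $h=f$. Then $f=h$ is central, so $Rf=Rh$ is two-sided and $f$ is right invariant, contrary to hypothesis. Hence $k=n$, $s=1$, so ${\rm Nuc}_r(S_f)\cong E_{\hat h}$ and the displayed values follow from (i). The main obstacle I expect is the orientation/opposite-ring bookkeeping in the second paragraph; the rest is the structure theory of simple Artinian rings together with routine dimension counts.
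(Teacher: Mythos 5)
Your proof is correct, including the delicate point you flag: the identification $E(f)\cong{\rm End}_R(R/Rf)^{\mathrm{op}}\cong(D^{\mathrm{op}})^{\mathrm{op}}=D$ is exactly what is needed to land on ${\rm Nuc}_r(S_f)$ itself rather than its opposite, and your Jordan--H\"older argument for $k'=k$ and the dimension counts giving $\deg(\hat h)=m/s$, $[{\rm Nuc}_r(S_f):F]=ms$, and part (ii) are all sound. The paper defers this proof to \cite{AO} and \cite{TP21}, but your route --- Wedderburn decomposition of the simple Artinian ring $R/Rh$ with centre $E_{\hat h}$, the eigenring-as-endomorphism-ring identification, and $F$-dimension bookkeeping --- is essentially the same machinery the paper itself deploys in its proof of the generalization, Theorem \ref{thm:mainII}.
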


Comparing vector space dimensions we obtain that $[S_f:{\rm Nuc}_r(S_f)]=k.$
Moreover, if ${\rm deg}(h)=mn$ and  $\hat{h}$ is irreducible in $F[x]$, then $f$ is irreducible and ${\rm Nuc}_r(S_f)\cong E_{\hat{h}}$ \cite[Proposition 4.1]{GLN13}.

\begin{corollary}\label{cor:main2}
Suppose that $f$ is irreducible and that $m$ is prime.
 Then $f$ is not right invariant and one of the following holds:
 \\ (i) ${\rm Nuc}_r(S_f)\cong E_{\hat{h}}$,
 $[{\rm Nuc}_r(S_f) :F]=m$, and ${\rm deg}(h)=mn$.
\\  (ii) ${\rm Nuc}_r(S_f)$ is a central division algebra over $F=E_{\hat{h}}$ of prime degree $m$, $[{\rm Nuc}_r(S_f) :F]=m^2$, and $m$ divides $n$. This case occurs when $\hat{h}(x)=x+a\in F[x]$, i.e.  $h(t)=t^n+a$.
\end{corollary}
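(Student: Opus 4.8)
The plan is to read essentially everything off Theorem \ref{thm:main2}(i) together with the primality of $m$, and to treat the right-invariance clause separately, as it is the only genuinely subtle point.

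First I would invoke Theorem \ref{thm:main2}(i): since $f$ is monic and irreducible of degree $m$, the integer $s=n/k$ divides $m$, where $k$ is the number of irreducible factors of $h$ in $R$. As $m$ is prime and $s\geq 1$, this immediately forces $s=1$ or $s=m$, and these two possibilities will produce (i) and (ii) respectively. If $s=1$, then ${\rm Nuc}_r(S_f)$ is a central division algebra of degree $1$ over $E_{\hat h}$, hence equals $E_{\hat h}$; the formulas of Theorem \ref{thm:main2}(i) then read $\deg(\hat h)=m/s=m$, $\deg(h)=nm/s=nm$, and $[{\rm Nuc}_r(S_f):F]=ms=m$, which is exactly (i).

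If instead $s=m$, then $\deg(\hat h)=m/s=1$, so $\hat h(x)=x+a$ is monic linear with $a\in F$; consequently $E_{\hat h}=F[x]/(x+a)\cong F$ and $h(t)=\hat h(t^n)=t^n+a$. Theorem \ref{thm:main2}(i) now gives that ${\rm Nuc}_r(S_f)$ is a central division algebra over $F$ of degree $s=m$, with $[{\rm Nuc}_r(S_f):F]=ms=m^2$; finally $m=s=n/k$ shows $m\mid n$. This is (ii), and the equation $\hat h(x)=x+a$ characterises it, since in case (i) one has $\deg(\hat h)=m>1$.

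The delicate part, and where I expect the real work, is the assertion that $f$ is not right invariant. I would argue by contradiction: if $f$ is right invariant then, by the characterisation $f=a\,g(t)\,t^j$ with $a\in K^\times$, $g\in C(R)$ and $j\geq 0$, irreducibility together with $m\geq 2$ rules out a factor of $t$, forcing $j=0$; hence $f\in C(R)=F[t^n]$, so $n\mid \deg(f)=m$, and primality with $n>1$ gives $n=m$ and $f=t^n+c$ for some $c\in F^\times$ (the hypothesis $(f,t)_r=1$ yielding $c\neq 0$). Thus the whole burden reduces to excluding irreducible central $f=t^n+c$ of prime degree $n=m$, and this is precisely the obstacle: such an $f$ is already central, so $h=f$ and $S_f=R/Rf$ is the associative cyclic algebra $(K/F,\sigma,-c)$, which is a division algebra, equivalently $f$ is irreducible, exactly when $-c$ is a non-norm of $K/F$. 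Such $c$ genuinely exist (for example $f=t^2+1\in\mathbb{C}[t;\can]$ produces Hamilton's quaternions), so this borderline case cannot be excluded outright; it falls under (ii), with ${\rm Nuc}_r(S_f)=S_f$ of dimension $m^2$. I would therefore either add the hypothesis that $f$ is not right invariant (consistent with the paper's standing focus on the nonassociative case), or restrict the right-invariance clause to those $f$ with $n\nmid m$, for which the reduction above already forces $j=0$ to collapse and delivers the conclusion at once.
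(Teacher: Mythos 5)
Your handling of the dichotomy is exactly the paper's: its proof likewise quotes Theorem \ref{thm:main2}, uses that $s$ divides the prime $m$ to split into $s=1$, which gives (i), and $s=m$, which gives $\deg(\hat{h})=1$, hence $E_{\hat{h}}=F$, a central division algebra of degree $m$ over $F$, $[{\rm Nuc}_r(S_f):F]=m^2$, and $m\mid n$ (the paper reads $m\mid n$ off $\deg(h)=km=n$, you off $m=s=n/k$ --- the same computation), and your observation that $\hat{h}(x)=x+a$ characterises case (ii) because case (i) has $\deg(\hat{h})=m>1$ is the intended content of the final sentence of the statement. On this part you have reproduced the paper's argument.

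The divergence is the clause ``$f$ is not right invariant'', and there your suspicion is justified. The paper disposes of it with the single sentence ``Thus $f$ is not right invariant in both cases.'' That can be justified in case (i): there $[{\rm Nuc}_r(S_f):F]=m$, whereas right invariance would make $S_f$ associative and force ${\rm Nuc}_r(S_f)=S_f$, of dimension $mn>m$. But in case (ii) the dimensions $m^2$ and $mn$ coincide precisely when $n=m$, and your reduction (right invariant, irreducible, monic, with $(f,t)_r=1$ forces $j=0$ in Jacobson's normal form $f=ag(t)t^j$, hence $f\in C(R)=F[t^n]$, hence $n\mid m$, hence $n=m$ and $f=t^n+c$ with $c\in F^\times$) correctly isolates the only possible exception --- and your example shows it genuinely occurs: $f=t^2+1\in\mathbb{C}[t;\can]$ is central, hence right invariant, monic of prime degree $m=n=2$, satisfies $(f,t)_r=1$, and is irreducible since $R/Rf\cong(\mathbb{C}/\mathbb{R},\can,-1)$ is Hamilton's quaternion algebra; it lands in case (ii) with ${\rm Nuc}_r(S_f)=S_f$ of dimension $m^2$. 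So the non-invariance claim is false as stated, the defect lies in the paper's proof rather than in yours, and your proposed repairs are the right ones: either assume $f$ is not right invariant (as the paper itself does in the two corollaries immediately following this one), or assert non-invariance only when $n\neq m$, where it does follow, since in case (ii) with $n>m$ one has $k=n/m>1$ while a right invariant irreducible $f$ would satisfy $f\sim h$ and $k=1$.
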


\begin{proof}
Since $s$ divides $m$ by Theorem \ref{thm:main2}, $s=1$ which implies (i), or $s=m$. If $s=m$ then $\hat{h}$ has degree one and so $F=E_{\hat{h}}$. Furthermore, then ${\rm Nuc}_r(S_f)$ is a central simple algebra over $F$ degree $m$. Thus $f$ is not right invariant in both cases.
Since here we have ${\rm deg}(h)=km=n$, $m$ also must divide $n$ in this case.
\end{proof}

\begin{corollary}
 Suppose that $f$ is irreducible and not right invariant. Let $n=pq$ for $p$ and $q$ prime.
\\ (i) ${\rm Nuc}_r(S_f)\cong E_{\hat{h}}$ is a field extension of $F$ of degree $m$, or  ${\rm Nuc}_r(S_f)$ is a central division algebra over $E_{\hat{h}}$ of prime degree $q$ (resp., $p$),
 $[{\rm Nuc}_r(S_f):F] =qm$ (resp., $=pm$), and $q$ (resp., $p$) divides $m$.
 \\ (ii) If ${\rm gcd}(m,n)=1$, then ${\rm Nuc}_r(S_f)\cong E_{\hat{h}}$ is a field extension of $F$ of degree $m$.
\end{corollary}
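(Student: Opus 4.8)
The plan is to read everything off from Theorem \ref{thm:main2}, which presents ${\rm Nuc}_r(S_f)$ as a central division algebra over $E_{\hat{h}}$ of degree $s = n/k$, where $k$ is the number of irreducible factors of $h$ in $R$, together with the relations $[{\rm Nuc}_r(S_f):F] = ms$, $deg(\hat{h}) = m/s$, $deg(h) = nm/s$, and $s \mid m$. First I would observe that, since $s = n/k$ with $k$ a positive integer, $s$ is automatically a divisor of $n$ (from $n = sk$); combined with $s \mid m$ this gives $s \mid gcd(m,n)$, so when $n = pq$ the value of $s$ lies among the divisors $1, p, q, pq$ of $n$.

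The heart of the argument, and the only place the hypothesis that $f$ is not right invariant is used, is to exclude $s = n$. Here I would argue by degrees: if $s = n$, then $deg(h) = nm/s = m = deg(f)$. Writing $h = gf$ with $g \in R$ (as in the definition of the minimal central left multiple) and comparing degrees forces $deg(g) = 0$; since $h$ and $f$ are both monic, the leading coefficient of $gf$ equals the constant $g$, so $g = 1$ and $h = f$. But $h \in C(R) = F[t^n]$, so $f$ would be central, whence $Rf = fR$ is two-sided and $f$ is right invariant, contradicting the hypothesis. Equivalently, non-right-invariance forces $deg(h) > m$, i.e. $s < n$. Thus $s$ is a proper divisor of $n = pq$, so $s \in \{1, p, q\}$.

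It remains to translate the three possibilities back through Theorem \ref{thm:main2}. If $s = 1$, then ${\rm Nuc}_r(S_f) = E_{\hat{h}}$ and $[E_{\hat{h}}:F] = m$; since $f$ is irreducible, $\hat{h}$ is irreducible in $F[x]$, so $E_{\hat{h}}$ is a field extension of $F$ of degree $m$, giving the first alternative of (i). If $s = q$ (resp. $s = p$), then ${\rm Nuc}_r(S_f)$ is a central division algebra over $E_{\hat{h}}$ of the prime degree $q$ (resp. $p$), with $[{\rm Nuc}_r(S_f):F] = qm$ (resp. $pm$), and $s \mid m$ yields $q \mid m$ (resp. $p \mid m$); this is the second alternative, completing (i). For (ii), the divisibilities $s \mid m$ and $s \mid n$ already force $s \mid gcd(m,n) = 1$, so $s = 1$ and the $s = 1$ case above gives ${\rm Nuc}_r(S_f) \cong E_{\hat{h}}$, a field extension of $F$ of degree $m$; note that non-right-invariance is not even needed for (ii).

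I expect the exclusion of $s = n$ to be the only genuine obstacle, everything else being elementary divisibility applied to the output of Theorem \ref{thm:main2}. Within that step the delicate point is justifying that $h = f$ really forces $f$ to be right invariant: this is precisely the statement that a central polynomial generates a two-sided ideal, which is an instance of the characterization of right-invariant polynomials recalled in Section \ref{sec:prel}.
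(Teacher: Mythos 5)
Your proposal is correct and follows essentially the same route as the paper: both read everything off Theorem \ref{thm:main2}, enumerate the possible values of $s$ from $n=ks$ together with $s\mid m$, and use non-right-invariance only to rule out $s=n$. The one difference is cosmetic: where the paper simply asserts that non-right-invariance gives $k>1$, you supply the justification (if $s=n$ then $\deg(h)=m$, forcing $h=f\in C(R)$, hence $f$ right invariant), which is a valid filling-in of a step the paper leaves implicit rather than a different argument.
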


\begin{proof}
(i) Since $f$ is not right invariant, we note that $k>1$. If $n=pq$ then the equation $n=ks$ 
forces either that $s=1$ and $k=n$, hence that ${\rm Nuc}_r(S_f)\cong E_{\hat{h}}$, or that $s\not=1$ and then w.o.l.o.g. that  $k=p$ and $s=q$, so that here
${\rm Nuc}_r(S_f)$ is a central division algebra over $E_{\hat{h}}$ of degree $q$, ${\rm deg}(h)=pm$, ${\rm deg}(\hat{h})=pm/pq=m/q,$ and $[{\rm Nuc}_r(S_f):F]=q^2\frac{m}{q}=qm$. In particular, $q$ divides $m$.
\\ (ii)
  If $m$ is not divisible by $p$ and $q$, then $s=1$ by the proof of (i), or else we obtain a contradiction.
\end{proof}

This observation generalizes as follows by induction:

\begin{corollary} \label{cor:main1}
Suppose that $f$ is irreducible and not right invariant. Let $n=p_1\cdots p_l$ be the prime decomposition of $n$.
\\ (i) ${\rm Nuc}_r(S_f)\cong E_{\hat{h}}$ is a field extension of $F$ of degree $m$, or
${\rm Nuc}_r(S_f)$ is a central division algebra over $E_{\hat{h}}$ of degree $q_1\cdots q_r$, with $q_i\in \{p_1,\dots, p_l\}$,
$[{\rm Nuc}_r(S_f):F] =q_1\cdots q_r m$, and $q_1\cdots q_r$  divides $m$.
 \\ (ii)  If ${\rm gcd}(m,n)=1$ (i.e., $m$ is not divisible by any set of prime factors of $n$), then ${\rm Nuc}_r(S_f)\cong E_{\hat{h}}$ is a field extension of $F$ of degree $m$.
\end{corollary}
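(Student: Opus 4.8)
The plan is to read the statement off Theorem \ref{thm:main2} directly; the phrase ``by induction'' only signals that this result packages together Corollary \ref{cor:main2} and the intermediate $n=pq$ case. First I would apply Theorem \ref{thm:main2}: writing $k$ for the number of irreducible factors of $h$ in $R$, the right nucleus ${\rm Nuc}_r(S_f)$ is a central division algebra over $E_{\hat h}$ of degree $s=n/k$, and one has $s\mid m$, $[{\rm Nuc}_r(S_f):F]=ms$, and $\deg(\hat h)=m/s$. As in the proof for $n=pq$, the hypothesis that $f$ is not right invariant yields $k>1$, so $s=n/k$ is a proper divisor of $n$. Since $s$ divides $n=p_1\cdots p_l$, it is a product $q_1\cdots q_r$ of primes drawn from $\{p_1,\dots,p_l\}$ (with the multiplicities occurring in the factorization of $n$).

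Next I would split on the value of $s$. If $s=1$, i.e.\ $r=0$, then ${\rm Nuc}_r(S_f)$ is a central division algebra of degree $1$ over $E_{\hat h}$, so ${\rm Nuc}_r(S_f)\cong E_{\hat h}$; because $\deg(\hat h)=m/s=m$ and $\hat h$ is irreducible in $F[x]$ (as $f$ is irreducible in $R$), the algebra $E_{\hat h}=F[x]/(\hat h(x))$ is a field extension of $F$ of degree $m$, giving the first alternative of (i). If $s>1$, then ${\rm Nuc}_r(S_f)$ is a central division algebra over $E_{\hat h}$ of degree $s=q_1\cdots q_r$, and the relations $[{\rm Nuc}_r(S_f):F]=ms=q_1\cdots q_r\,m$ together with $q_1\cdots q_r=s\mid m$ are precisely the second alternative; this proves (i). For (ii), I would combine the two divisibilities $s\mid m$ and $s\mid n$ from Theorem \ref{thm:main2} to get $s\mid\gcd(m,n)$; under the hypothesis $\gcd(m,n)=1$ this forces $s=1$, and we fall into the first alternative of (i), so ${\rm Nuc}_r(S_f)\cong E_{\hat h}$ is a field extension of $F$ of degree $m$.

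I do not anticipate a substantive obstacle, since the corollary merely repackages Theorem \ref{thm:main2}. The only points demanding care are the bookkeeping that writes the divisor $s$ of $n=p_1\cdots p_l$ as an explicit product $q_1\cdots q_r$ of its prime factors, and the degenerate case $s=1$, where one must recognize a ``central division algebra of degree $1$'' as the field $E_{\hat h}$ and compute its degree over $F$ from $\deg(\hat h)=m$.
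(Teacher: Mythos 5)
Your proposal is correct and matches the paper's intent: the paper gives no written proof, stating only that the result follows ``by induction'' from the $n=pq$ case, and your direct reading of Theorem \ref{thm:main2} (with $s=n/k$ a divisor of $n$, hence a product $q_1\cdots q_r$ of primes from the factorization of $n$, together with $s\mid m$, $[{\rm Nuc}_r(S_f):F]=ms$, and $\deg(\hat h)=m/s$) is exactly the content that induction packages. Your handling of the degenerate case $s=1$ and of part (ii) via $s\mid\gcd(m,n)=1$ is precisely the argument used in the $n=pq$ proof that the corollary generalizes.
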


\begin{corollary}\label{thm:main3}
Let $f \in F[t] \subset R$. Suppose that $f$ is irreducible in $R$ and not right invariant. Let $n$ either be prime or ${\rm gcd}(m,n)=1$.
Then ${\rm Nuc}_r(S_f)\cong F[t]/(f).$
\end{corollary}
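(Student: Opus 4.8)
The plan is to derive the result directly from the structure theorems already established, the only extra input being the commutative subring $F[t]/(f(t))$ that is known to sit inside the right nucleus, together with a dimension count over $F$. I would first record that the standing hypotheses of this section are met: since $\deg(f)=m\geq 2$ we have $f\neq t$, and for monic irreducible $f$ this is equivalent to $(f,t)_r=1$; boundedness is automatic because $R=K[t;\sigma]$ is module-finite over its center $F[t^n]$. Thus $h$ and $\hat h$ are defined and the results quoted above apply.

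Next I would establish the embedding. Because every coefficient of $f$ lies in $F={\rm Fix}(\sigma)$, these coefficients commute with $t$ in $K[t;\sigma]$, as $ta=\sigma(a)t=at$ for $a\in F$; hence the subring $F[t]\subset K[t;\sigma]$ is the ordinary commutative polynomial ring and its multiplication agrees with that of $R$. In particular any proper factorization of $f$ in $F[t]$ would already be a factorization in $R$, so irreducibility of $f$ in $R$ forces $f$ to be irreducible in the commutative ring $F[t]$. By \cite[Proposition 2]{brown2018nonassociative} it then follows that $F[t]/(f(t))$ is a subfield of ${\rm Nuc}_r(S_f)$ of degree $m$ over $F$.

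I would then pin down the size of the whole right nucleus using the two branches of the hypothesis. If $n$ is prime, Theorem \ref{thm:main2}(ii) gives ${\rm Nuc}_r(S_f)\cong E_{\hat h}$ with $[{\rm Nuc}_r(S_f):F]=m$; if instead $\gcd(m,n)=1$, the same conclusion is delivered by Corollary \ref{cor:main1}(ii). In either case ${\rm Nuc}_r(S_f)$ is a field extension of $F$ of dimension $m$. Comparing dimensions, $F[t]/(f(t))$ is an $m$-dimensional $F$-subspace of the $m$-dimensional $F$-space ${\rm Nuc}_r(S_f)$, so the inclusion is forced to be an equality and ${\rm Nuc}_r(S_f)=F[t]/(f(t))$ as $F$-algebras.

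There is no deep obstacle once the earlier theorems are available; the argument is essentially a matching of two $m$-dimensional objects. The one step that genuinely needs care is the transfer of irreducibility in the second paragraph, namely verifying that $F[t]\subset K[t;\sigma]$ is commutative and that factorizations there coincide with factorizations in $R$, since this is exactly what legitimizes the embedding of $F[t]/(f(t))$ as a subfield of the nucleus rather than merely as an abstract $F$-algebra of the right dimension.
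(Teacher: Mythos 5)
Your proof is correct and takes essentially the same route as the paper: both arguments embed $F[t]/(f(t))$ as a subfield of ${\rm Nuc}_r(S_f)$ of degree $m$ via \cite[Proposition 2]{brown2018nonassociative} and then force equality by the dimension count supplied by Theorem \ref{thm:main2}(ii) in the prime-$n$ case and Corollary \ref{cor:main1}(ii) in the $\gcd(m,n)=1$ case. The additional verifications you spell out --- that $(f,t)_r=1$ and boundedness hold, and that irreducibility of $f$ in $K[t;\sigma]$ transfers to the commutative subring $F[t]$ --- are details the paper leaves implicit, and you handle them correctly.
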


\begin{proof}
If $f$ is irreducible in $R$, then $F[t]/(f)$ is a subfield of the right nucleus of degree $m$, hence must be all of the right nucleus, since that has dimension $m$ due to our assumptions (Theorem \ref{thm:main2} (ii), Corollary \ref{cor:main1} (ii)).
\end{proof}

%
%

\section{Powers of $t$ that lie in the right nucleus of $S_f$} \label{sec:powers of t}

Throughout this section, let $f(t) = t^m - \sum\limits_{i=0}^{m-1} a_it^i \in R=K[t;\sigma]$ be not right invariant. Initially, we do not assume anything on the ring $R$.

\begin{theorem} \label{Petit(5)} \cite{petit1966certains}
 The following are equivalent:
\\ (i)  $a_i \in {\rm Fix}(\sigma)$ for all $i \in \lbrace 0,1,\dots,m-1 \rbrace$,
\\ (ii) $t \in {\rm Nuc}_r(S_f)$,
\\ (iii) $t^mt = tt^m$,
\\ (iv) $ft \in Rf$.
\\ (v) all powers of $t$ are associative in $S_f$.
\end{theorem}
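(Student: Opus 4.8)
The plan is to establish the cycle (i) $\Leftrightarrow$ (iv) $\Leftrightarrow$ (iii), together with (iv) $\Leftrightarrow$ (ii) and (ii) $\Rightarrow$ (v) $\Rightarrow$ (iii), so that all five statements collapse to one another. The single genuine computation is concentrated in the equivalence of (i) and (iv); everything else is formal.

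First I would record the identity obtained by commuting $f$ past $t$. Writing $b_i = \sigma(a_i)-a_i$, a direct expansion gives
\[ ft - tf = \sum_{i=0}^{m-1}(\sigma(a_i)-a_i)\,t^{i+1} = \sum_{i=0}^{m-1} b_i\,t^{i+1}, \]
a polynomial of degree at most $m$. Since $tf \in Rf$ trivially, this shows $ft \in Rf$ exactly when $g := \sum_{i=0}^{m-1} b_i t^{i+1}$ lies in $Rf$. The same expression reappears on reducing $t^m\circ t - t\circ t^m$ modulo $f$: the element $t^m$ of $S_f$ is $\sum a_i t^i$, so $t^m\circ t \equiv \sum a_i t^{i+1}$ and $t\circ t^m \equiv \sum\sigma(a_i)t^{i+1}$, whence $t^m\circ t - t\circ t^m \equiv -g \;({\rm mod}_r f)$. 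Thus both (iii) and (iv) are equivalent to $g\in Rf$, while (ii) $\Leftrightarrow$ (iv) is immediate from the description ${\rm Nuc}_r(S_f)=\{h\in R : {\rm deg}(h)<m,\ fh\in Rf\}$ together with ${\rm deg}(t)=1<m$.

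The heart of the matter is to show that $g\in Rf$ forces $g=0$, i.e. that all $b_i$ vanish, which is (i). The implication (i) $\Rightarrow$ (iv) is clear, since $b_i=0$ makes $g=0$. For the converse I would argue by degree: a nonzero element of $Rf$ has degree at least $m$, whereas ${\rm deg}(g)\le m$, so $g\in Rf$ with $g\neq 0$ would force ${\rm deg}(g)=m$, hence $b_{m-1}\neq 0$ and $g=b_{m-1}f$. Comparing coefficients in $\sum_{i=0}^{m-1}b_i t^{i+1} = b_{m-1}(t^m - \sum a_i t^i)$ yields $b_{m-1}a_0=0$ and $b_{j-1}=-b_{m-1}a_j$ for $1\le j\le m-1$. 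The anticipated main obstacle is closing this off, and it is handled by an induction on the index: from $b_{m-1}\neq 0$ and $b_{m-1}a_0=0$ we get $a_0=0$, hence $b_0=\sigma(a_0)-a_0=0$; feeding $b_0=-b_{m-1}a_1$ gives $a_1=0$ and $b_1=0$, and so on, until $b_{m-2}=-b_{m-1}a_{m-1}$ forces $a_{m-1}=0$ and therefore $b_{m-1}=0$, contradicting $b_{m-1}\neq 0$. Hence $g=0$, giving (i). (Note that this argument uses nothing about right-invariance.)

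Finally I would close the cycle through (v). For (ii) $\Rightarrow$ (v), I use that ${\rm Nuc}_r(S_f)$ is an associative subalgebra of $S_f$: if $t\in{\rm Nuc}_r(S_f)$, then the subalgebra it generates is contained in ${\rm Nuc}_r(S_f)$, so every $S_f$-power of $t$ is unambiguously defined and again lies in ${\rm Nuc}_r(S_f)$; since an element of ${\rm Nuc}_r(S_f)$ annihilates every associator in which it occupies the third slot, all powers of $t$ associate, which is (v). For (v) $\Rightarrow$ (iii), the subalgebra generated by $t$ is then associative and singly generated, hence its powers commute, so the products of $t^m\,({\rm mod}_r f)$ with $t$ agree in both orders; as $t^m\,({\rm mod}_r f)$ is precisely the element denoted $t^m$ in (iii), this gives (iii) and completes the equivalences.
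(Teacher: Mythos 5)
Your proposal is correct, but it is genuinely different from what the paper does: the paper gives no argument at all, simply citing Petit's formulas (16) (for (i)$\Leftrightarrow$(ii)) and (5) (for the equivalence of (ii), (iii), (iv), (v)). Your proof is self-contained, and its organizing idea --- reducing (iii) and (iv) simultaneously to the single condition $g = ft - tf = \sum_{i=0}^{m-1}(\sigma(a_i)-a_i)t^{i+1} \in Rf$ --- is sound; I checked the two computations ($ft-tf$ in $R$, and $t^m\circ t - t\circ t^m \equiv -g \;{\rm mod}_r\, f$ in $S_f$, where the reduction of the $b_{m-1}t^m$ term is handled correctly) and both are right. The heart of your argument, that $g\in Rf$ forces $g=0$, also works: $\deg(g)\le m$ and nonzero elements of $Rf$ have degree $\ge m$, so a nonzero $g$ would equal $b_{m-1}f$ with $b_{m-1}\ne 0$, and your coefficient chain $b_{m-1}a_0=0 \Rightarrow a_0=0 \Rightarrow b_0=0 \Rightarrow a_1=0 \Rightarrow \cdots \Rightarrow a_{m-1}=0 \Rightarrow b_{m-1}=0$ closes the contradiction cleanly; your parenthetical observation that this nowhere uses right-invariance is a genuine bonus over merely quoting Petit in the section's standing hypotheses. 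The remaining links are also fine: (ii)$\Leftrightarrow$(iv) is immediate from the eigenspace description of ${\rm Nuc}_r(S_f)$ (valid here since $S_f$ is not associative, as the section assumes $f$ not right invariant, and $\deg(t)=1<m$ since $m\ge 2$), and your treatment of (v) via the fact that a multiplicatively closed subset of ${\rm Nuc}_r(S_f)$ satisfies generalized associativity (only the third slot of the associator need be nuclear) is correct, as is deriving (iii) from (v) by reading $t^m\circ t$ and $t\circ t^m$ as two bracketings of the $(m+1)$st power. What each approach buys: the paper's citation keeps the exposition short and defers to the original source, while your version makes the theorem verifiable within the paper itself, makes explicit exactly which polynomial identity underlies all five conditions, and isolates the elementary degree-and-induction step that rules out nonzero left multiples of $f$ of degree $m$ of the special form $\sum b_i t^{i+1}$.
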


\begin{proof} (i) and (ii) are equivalent by \cite[(16)]{petit1966certains} and (ii), (iii), (iv) and (v) are equivalent by \cite[(5)]{petit1966certains}.
\end{proof}

We obtain the following weak generalization of Theorem \ref{Petit(5)}:

\begin{theorem}\label{Condition for tk in the right nucleus}
Let $k \in \lbrace 1,2,\dots,m-1 \rbrace$. If $a_i \in {\rm Fix}(\sigma^k)$ for all $i \in \lbrace 0,1,\dots,m-1 \rbrace$, then $t^k \in {\rm Nuc}_r(S_f)$. In particular, then $t^mt^k = t^kt^m$ in $S_f$.
\end{theorem}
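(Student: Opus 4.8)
The plan is to use the explicit description of the right nucleus recorded earlier, namely
\[
{\rm Nuc}_r(S_f)=\{g\in R\,|\, {\rm deg}(g)<m \text{ and } fg\in Rf\},
\]
and to reduce the assertion $t^k\in {\rm Nuc}_r(S_f)$ to verifying the single ideal membership $ft^k\in Rf$. Since $k\in\{1,\dots,m-1\}$, we have ${\rm deg}(t^k)=k<m$, so the degree condition is automatic and only $ft^k\in Rf$ remains to be established.

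The key step is a direct comparison of $ft^k$ and $t^kf$ inside $R=K[t;\sigma]$. First I would expand
\[
ft^k=\Big(t^m-\sum_{i=0}^{m-1}a_it^i\Big)t^k=t^{m+k}-\sum_{i=0}^{m-1}a_it^{i+k},
\]
and, using the commutation rule $t^ka=\sigma^k(a)t^k$,
\[
t^kf=t^{m+k}-\sum_{i=0}^{m-1}\sigma^k(a_i)t^{i+k}.
\]
The hypothesis $a_i\in{\rm Fix}(\sigma^k)$ for all $i$ gives $\sigma^k(a_i)=a_i$, so the two expressions coincide: $ft^k=t^kf$. In particular $ft^k=t^k\cdot f\in Rf$, which is exactly what the right-nucleus criterion demands, and hence $t^k\in{\rm Nuc}_r(S_f)$.

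For the final assertion I would interpret the $m$-fold power $t^m$ in $S_f$ as the reduction $t^m\ {\rm mod}_r f=\sum_{i=0}^{m-1}a_it^i$, and compute both $t^m t^k$ and $t^k t^m$ by multiplying in $R$ and reducing modulo $f$. The same cancellation $\sigma^k(a_i)=a_i$ shows that the two products already agree in $R$ before reduction, so their images in $S_f$ agree and $t^mt^k=t^kt^m$; this generalises condition (iii) of Theorem \ref{Petit(5)} from $k=1$ to arbitrary $k$. I do not expect a genuine obstacle, since the heart of the matter is a one-line computation in $R$ followed by the definition of the right nucleus. The only point requiring mild care is the convention for $t^m$ as an element of $S_f$ (it must first be reduced to a polynomial of degree $<m$), together with the observation that in any right division $g=qf+r$ the term $q\,ft^k$ again lies in $Rf$, because $ft^k\in Rf$; this is precisely what makes reduction modulo $f$ behave multiplicatively on the right by $t^k$ and guarantees no non-associativity interferes.
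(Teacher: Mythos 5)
Your proposal is correct and takes essentially the same route as the paper: both prove $ft^k=t^kf$ by commuting $t^k$ past the coefficients via $\sigma^k(a_i)=a_i$, concluding $ft^k\in Rf$ and hence $t^k\in{\rm Nuc}_r(S_f)$. The only (harmless) divergence is the final claim, where the paper deduces $t^mt^k=t^kt^m$ in $S_f$ from the associator identity $[t^k,t^{m-k},t^k]=0$, while you verify directly that $\bigl(\sum_{i=0}^{m-1}a_it^i\bigr)t^k=t^k\bigl(\sum_{i=0}^{m-1}a_it^i\bigr)$ already holds in $R$ before reduction modulo $f$ --- both finishes are valid one-line arguments.
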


\begin{proof}
Suppose that $a_i \in {\rm Fix}(\sigma^k)$ for all $i$. Then
\begin{align*}
ft^k &= (t^m-\sum\limits_{i=0}^{m-1} a_it^i)t^k
     = t^mt^k - \sum\limits_{i=0}^{m-1} a_it^it^k \\
     &= t^kt^m - t^k\sum\limits_{i=0}^{m-1} \sigma^{-k}(a_i)t^i
     = t^k(t^m - \sum\limits_{i=0}^{m-1} a_it^i) \text{ (as $a_i \in {\rm Fix}(\sigma^k)$ $\forall i$) }\\
     &= t^kf \in Rf,
\end{align*}
i.e. $ft^k \in Rf$, and so $t^k \in {\rm Nuc}_r(S_f)$ as claimed. Since $t^k \in {\rm Nuc}_r(S_f)$, we have in particular that $[t^k,t^{m-k},t^k]=0$ in $S_f$, that is $t^k(t^{m-k}t^k)=(t^kt^{m-k})t^k$. Therefore $t^kt^m=t^mt^k$ in $S_f$.
\end{proof}

From now on we often write $N = {\rm Nuc}(S_f)$ for ease of notation.

\begin{proposition}\label{t^s in Nuc}
Suppose that there exists $s \in \lbrace 1,2,\dots,m-1 \rbrace$ such that $f \in {\rm Fix}(\sigma^s)[t;\sigma]$.
\\ (i) If $m=qs$ for some positive integer $q$, then
$$N \oplus  Nt^s \oplus  Nt^{2s} \oplus \dots \oplus  Nt^{(q-1)s}\oplus  N (\sum_{i=0}^{m-1} a_it^i) $$
is an $F$-sub vector space of ${\rm Nuc}_r(S_f).$
\\ (ii) If $m = qs + r$ for some positive integers $q,r$ with $0 < r < s$, then $$N \oplus Nt^s \oplus Nt^{2s} \oplus \dots \oplus Nt^{qs}$$
is an $F$-sub vector space of ${\rm Nuc}_r(S_f).$
\end{proposition}

\begin{proof}
 (i) Since $a_i \in {\rm Fix}(\sigma^s)$, we have that $t^s \in {\rm Nuc}_r(S_f)$ by Theorem \ref{Condition for tk in the right nucleus}. Since the right nucleus is a subalgebra of $S_f$, this implies that $t^{2s},\dots,t^{(q-1)s}, (t^s)^q = t^m = \sum_{i=0}^{m-1} a_it^i  \in {\rm Nuc}_r(S_f)$. Furthermore, we know that $N \subset  {\rm Nuc}_r(S_f)$, and so
$Nt^{js} \subset  {\rm Nuc}_r(S_f)$ for any $j \in \lbrace 0,1,\dots,q \rbrace$. Therefore
   $N \oplus Nt^s \oplus \dots \oplus Nt^{(q-1)s} \oplus N(\sum_{i=0}^{m-1} a_it^i)\subset {\rm Nuc}_r(S_f)$
    as claimed.
\\ (ii) We have $t^s \in {\rm Nuc}_r(S_f)$. Again since ${\rm Nuc}_r(S_f)$ is a subalgebra of $S_f$, this implies that $t^{2s},\dots t^{qs}, t^{(q+1)s},\dots \in {\rm Nuc}_r(S_f)$, hence the assertion as in (i).
\end{proof}

Note that the powers $t^{qs}, t^{(q+1)s}, t^{(q+2)s},\dots$ of $t^s$ in Proposition \ref{t^s in Nuc} (ii) lie in ${\rm Nuc}_r(S_f)$, but they need not be equal to polynomials in $t^s$, since $qs,(q+1)s, (q+2)s,\dots \geq m$.

\begin{corollary}
Let $K/F$ be a cyclic Galois extension of degree $n<m$ with Galois group ${\rm Gal}(K/F)=\langle \sigma \rangle$.
\\ (i) If $m=qn$, then
$$N \oplus  Nt^n \oplus Nt^{2n} \oplus \dots \oplus Nt^{(q-1)n}$$
is an $F$-sub vector space of ${\rm Nuc}_r(S_f)$ of dimension $q[N:F]$ and
$t^m = \sum_{i=0}^{m-1} a_it^i \in {\rm Nuc}_r(S_f).$
\\(ii) If $m = qn + r$ for some positive integers $q,r$ with $0 < r < n$, then
$$N \oplus Nt^n \oplus Nt^{2n} \oplus \dots \oplus Nt^{qn} $$
is an $F$-sub vector space of ${\rm Nuc}_r(S_f)$ of dimension $(q+1)[N:F]$.
 In particular, if  $n$ is either prime or ${\rm gcd}(m,n)=1$, ${\rm gcrd}(f,t)=1$, as well as $[N:F]= n$, then $f$ is reducible.
\end{corollary}

\begin{proof}
There exist integers $q,r$ such that $q \neq 0$, and $m=qn+r$ where $0 \leq r< n$. Moreover, we have
  $a_i \in {\rm Fix}(\sigma^n)  = K$ for all $i \in \lbrace 0,1,\dots,m-1 \rbrace$ for
  every $f(t) = t^{m} - \sum_{i=0}^{m-1} a_it^i \in R$. By Theorem \ref{Condition for tk in the right nucleus}
   this yields the assertions.
\end{proof}

We write $\sigma=\sigma\vert_{{\rm Fix}(\sigma^c)}$, for ease of notation, then:

\begin{theorem}
\label{L_f subalgebra of the right nucleus}
Suppose that $f(t) \in F[t]\subset R$.
Then
$N[t;\sigma]/N[t;\sigma]f$
is a Petit algebra and an associative subalgebra of ${\rm Nuc}_r(S_f)$.
\end{theorem}

\begin{proof}

Clearly $N[t;\sigma|_{N}]$ is well-defined, $f(t) \in F[t]\subset N[t;\sigma]$, and so  $N[t;\sigma]/N[t;\sigma]f$ is a subalgebra of $S_f$.

 Now $N \subset  {\rm Nuc}_r(S_f)$, and since $a_i \in F$ for all $i$, we have that $t^j \in {\rm Nuc}_r(S_f)$ for all $j$ by Theorem \ref{Condition for tk in the right nucleus}. Thus
$N \oplus Nt\oplus \dots \oplus Nt^{m-1} \subset  {\rm Nuc}_r(S_f)$ is contained in the right nucleus.
We have proved the assertion.
\end{proof}

\begin{corollary} 
 Suppose that $f(t)  \in F[t]\subset R$ is bounded and that $${\rm Nuc}_r(S_f)=N[t;\sigma]/N[t;\sigma]f.$$ Then $f$ is irreducible in $R$, if and only if $f$ is irreducible in $N[t;\sigma]$.
\end{corollary}

 \begin{proof}
 Let $f$ be irreducible in $N[t;\sigma]$, then $N[t;\sigma]/N[t;\sigma]f={\rm Nuc}_r(S_f)$ is a division algebra and therefore $f$ is irreducible in $R$.
 \end{proof}


\section{The nucleus of $S_f$}\label{sec:elements in N}


In this section we again assume that $f$ is not right invariant.
Then the elements of $K$ which lie in ${\rm Nuc}_r(S_f)$ are exactly the elements in the nucleus of $S_f$:

\begin{lemma}
$K \cap {\rm Nuc}_r(S_f)={\rm Nuc}(S_f)$.
\end{lemma}

\begin{proof}
Since $f$ is not right invariant, $S_f$ is not associative and thus ${\rm Nuc}_l(S_f) = {\rm Nuc}_m(S_f) = K$. Therefore
${\rm Nuc}(S_f)  = {\rm Nuc}_l(S_f) \cap {\rm Nuc}_m(S_f) \cap {\rm Nuc}_r(S_f)= K \cap {\rm Nuc}_r(S_f) $.
\end{proof}

Clearly $F \subset  {\rm Nuc}(S_f)$. 
Let
$f(t) = t^m - \sum_{i=0}^{m-1}a_it^i \in R.$

\begin{theorem}\label{Coeffs}
${\rm Nuc}(S_f)=\lbrace b \in K \,|\, \sigma^m(b)a_i=a_i\sigma^i(b) \text{ for all } i =0,1,2,\dots,m-1 \rbrace.$
\end{theorem}

\begin{proof}
Let $c\in \lbrace b \in K \,|\, \sigma^m(b)a_i=a_i\sigma^i(b) \text{ for all } i =0,1,2,\dots,m-1 \rbrace$. Then
an easy calculation shows that $f(t)c \in Rf$, hence that
$c\in {\rm Nuc}_r(S_f)=\{g\in R\,|\, {\rm deg}(g)<m \text{ and }fg\in Rf\}$.

Conversely, let $c \in {\rm Nuc}(S_f)= {\rm Nuc}_r(S_f) \cap K$. Then $[a(t),b(t),c]=0$ for all $a(t),b(t) \in S_f$, in particular,  $[t^k,t^{m-k},c]=0$ for all $k \in \lbrace 1,2,\dots,m-1\rbrace$. This implies $(t^kt^{m-k})c = t^k(t^{m-k}c),$ hence
$$ (\sum\limits_{i=0}^{m-1}a_it^i)c = t^k(\sigma^{m-k}(c)t^{m-k})\Rightarrow \sum\limits_{i=0}^{m-1}a_i\sigma^i(c)t^i = \sigma^m(c) \sum\limits_{i=0}^{m-1}a_it^i,$$
and thus $ a_i \sigma^i(c) = \sigma^m(c)a_i \text{ for each } i=0,1,\dots,m-1.$
Therefore $c \in \lbrace b \in K : \sigma^m(b)a_i=a_i\sigma^i(b) \text{ for all } i =0,1,2,\dots,m-1 \rbrace$ as required.
\end{proof}

 We denote the indices of the nonzero coefficients $a_i$ of $f(t) = t^m - \sum\limits_{i=0}^{m-1} a_it^i \in R$ by $\lambda_1,\dots,\lambda_{r}$, $1 \leq r \leq m$. The set of these indices we call
 $\Lambda_f = \{ \lambda_1,\lambda_2,\dots,\lambda_{r} \}\subset \lbrace 0,1,\dots,m \rbrace,$
 and write $\Lambda = \Lambda_f$ when it is clear from context which $f$ is being used.

\begin{proposition}\label{Lsigma,f for a field extension} 
 (i)
 ${\rm Nuc}(S_f) = \bigcap\limits_{j=1}^{r} {\rm Fix}(\sigma^{m-\lambda_j}).$
 In particular, ${\rm Nuc}(S_f)$ is a subfield
of $K$.
\\ (ii) If $a_{m-1} \neq 0$, then ${\rm Nuc}(S_f)=F.$
 \end{proposition}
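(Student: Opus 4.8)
The plan is to read both parts off the explicit description of ${\rm Nuc}(S_f)$ already established in Theorem \ref{Coeffs}, namely that $d \in {\rm Nuc}(S_f)$ if and only if $\sigma^m(d)a_i = a_i\sigma^i(d)$ for every $i = 0,1,\dots,m-1$. The strategy is to examine these $m$ conditions one index at a time and to discard the ones that are automatically satisfied.

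First I would observe that whenever $i \notin \Lambda_f$ we have $a_i = 0$, so the corresponding equation $\sigma^m(d)a_i = a_i\sigma^i(d)$ holds trivially and imposes no constraint on $d$. Thus the nucleus is cut out purely by the $r$ equations indexed by $\lambda_1,\dots,\lambda_r$. For each such index the coefficient $a_{\lambda_j}$ is nonzero, and here I would use crucially that $K$ is \emph{commutative}: the equation $\sigma^m(d)a_{\lambda_j} = a_{\lambda_j}\sigma^{\lambda_j}(d)$ becomes $a_{\lambda_j}\bigl(\sigma^m(d) - \sigma^{\lambda_j}(d)\bigr) = 0$, and cancelling the nonzero field element $a_{\lambda_j}$ leaves the clean condition $\sigma^m(d) = \sigma^{\lambda_j}(d)$. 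Applying $\sigma^{-\lambda_j}$ to both sides rewrites this as $\sigma^{m-\lambda_j}(d) = d$, i.e. $d \in {\rm Fix}(\sigma^{m-\lambda_j})$. Running this equivalence for every $j$ yields ${\rm Nuc}(S_f) = \bigcap_{j=1}^r {\rm Fix}(\sigma^{m-\lambda_j})$, which is part (i); since each ${\rm Fix}(\sigma^{m-\lambda_j})$ is a subfield of $K$ (the fixed field of a power of $\sigma$), so is their intersection, giving the final assertion of (i).

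For part (ii) I would simply specialize: the hypothesis $a_{m-1} \neq 0$ means $m-1 \in \Lambda_f$, so one of the factors in the intersection of (i) is ${\rm Fix}(\sigma^{m-(m-1)}) = {\rm Fix}(\sigma) = F$. Hence ${\rm Nuc}(S_f) \subseteq F$, and since $F \subseteq {\rm Nuc}(S_f)$ always holds (as noted just before Theorem \ref{Coeffs}), equality follows.

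I do not anticipate a genuine obstacle here, as the argument is essentially bookkeeping layered on top of Theorem \ref{Coeffs}. The one point deserving care is the exponent arithmetic: because $\sigma$ has finite order $n$, the automorphism $\sigma^{m-\lambda_j}$, and hence the field ${\rm Fix}(\sigma^{m-\lambda_j})$, depends only on $m-\lambda_j \bmod n$, so one should be slightly careful when $m-\lambda_j$ is negative or exceeds $n$; this is harmless but worth stating cleanly. The conceptual crux is that commutativity of $K$ is exactly what licenses the cancellation of $a_{\lambda_j}$ — in the general noncommutative division-ring setting of the introduction this step would fail, which is precisely why the clean fixed-field description is special to the field case.
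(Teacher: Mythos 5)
Your proposal is correct and takes essentially the same approach as the paper: both arguments read the nucleus off Theorem \ref{Coeffs}, discard the vacuous conditions where $a_i=0$, and cancel the nonzero coefficients (valid since $K$ is a field) to reduce each remaining condition to $\sigma^{m-\lambda_j}(d)=d$, with part (ii) being the specialization to $i=m-1$. Your explicit remarks on commutativity and on exponents modulo $n$ simply spell out steps the paper leaves implicit.
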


\begin{proof}
 (i) Let $u\in {\rm Nuc}(S_f) = \lbrace u \in K \,|\, \sigma^m(u)a_i = a_i\sigma^i(u) \text{ for all } i \in \lbrace 0,1,\dots,m-1 \rbrace \rbrace.$
 Then $\sigma^m(u)a_i = a_i\sigma^i(u) $ for each $i$ if and only if $ \sigma^{m-i}(u) = u$  for each $i$ such that $ a_i \neq 0$, which is equivalent to $\sigma^{m-\lambda_j}(u) = u \text{ for each } \lambda_j \in \Lambda_f$. This yields the assertion.
 \\ (ii) 
  Let $u\in {\rm Nuc}(S_f)$, then $\sigma^m(u)a_{m-1} = a_{m-1}\sigma^{m-1}(u)$ yields $\sigma(u)=u$, hence $u\in F$.
This implies the assertion.
\end{proof}

\begin{example} Let $\mathbb{F}_{16}=\mathbb{F}(a)$ with $a^4=a+1$ and $K=\mathbb{F}_{16}(z)$ be the rational function field over $\mathbb{F}_{16}$. Define $\sigma:K\longrightarrow K$, $\sigma(t)=a^5t$, then $\sigma$ has order 3 and $F={\rm Fix}(\sigma)=\mathbb{F}_{16}(z^3)={\rm Fix}(\sigma^2)$. Let $R=K[t;\sigma]$, then $C(R)=F[t^3]$ \cite[Example 2.16]{GLN13}.
Note that not every $f$ is bounded in this setup.

Let $f\in R$ be monic of degree $m$, then ${\rm Nuc}(S_f) = \bigcap\limits_{j=1}^{r} {\rm Fix}(\sigma^{m-\lambda_j})$
(Proposition \ref{Lsigma,f for a field extension}). If  we have $m-i=3l$ for all $a_{i} \neq 0$ then $N = K$, else
 ${\rm Nuc}(S_f)= F$.
\\ (i)
 Suppose that $f$ has degree $m=3q\geq 4$, then ${\rm Nuc}_r(S_f)$ contains an $F$-vector space of dimension $q[N:F]$.
If $f = g(t^3)$ for some $g \in K[x]$ then ${\rm Nuc}(S_f)=K$ and
$ K[x]/(g(x))=K \oplus K t^3 \oplus K t^{6} \oplus \dots \oplus Kt^{3(q-1)}$ is a sub vector space of ${\rm Nuc}_r(S_f)$.
\\ (ii)
Let $f(t)=t^2+\frac{1}{t+a}t+az^2+1$, then $f^*(t)=t^6+\frac{(a^3+a)z^3+a^2+a+1}{a^2t^3+a^2+a}t^3+a^3z^6+1$
 \cite[Example 2.16]{GLN13}, so $f$ is bounded and $f^*\in C(R)$. Here
$$\hat{h}(x)= x^2+\frac{(a^3+a)z^3+a^2+a+1}{a^2t^3+a^2+a}x+a^3z^6+1\in F[x]$$
has degree 2, and $h$ has degree $6=mn$.
Therefore $f$ is irreducible  and
$${\rm Nuc}_r(S_f)\cong F[x]/(x^2+\frac{(a^3+a)z^3+a^2+a+1}{a^2t^3+a^2+a}x+a^3z^6+1)$$
by Theorem \ref{thm:main2}.
\end{example}

From now on unless specified otherwise let $K/F$ be a cyclic Galois extension of degree $n>1$ with ${\rm Gal}(K/F)=\langle \sigma\rangle$.
Then $R$ has center $C(R) = F[t^n]\cong F[x]$, where $x=t^n$ \cite[Theorem 1.1.22]{J96} and every $f\in R$ is bounded.

\begin{theorem}\label{ConditionforL=F}
 If $d = {\rm gcd}(m-\lambda_1,m-\lambda_2,\dots,m-\lambda_r,n)$, then
$${\rm Nuc}(S_f) = {\rm Fix}(\sigma^d),$$
that is  $[{\rm Nuc}(S_f):F]  = d$.
 In particular, $ {\rm Nuc}(S_f)= F$  if and only if $d=1$.
\begin{proof}
By Proposition \ref{Lsigma,f for a field extension}, we have
$${\rm Nuc}(S_f) = \bigcap\limits_{\lambda_j \in \Lambda} {\rm Fix}(\sigma^{m-\lambda_j}) = {\rm Fix}(\sigma^{m-\lambda_1}) \cap {\rm Fix}(\sigma^{m-\lambda_2}) \cap \dots \cap {\rm Fix}(\sigma^{m-\lambda_r}).$$
It follows immediately that ${\rm Nuc}(S_f) = {\rm Fix}(\sigma^d)$. Clearly ${\rm Nuc}(S_f)=F$ if and only if ${\rm Fix}(\sigma^d) = F$ if and only if $\langle \sigma^d \rangle = \langle \sigma \rangle$, which is true if and only if $\sigma^d$ has order $n$. Now $ord(\sigma^d) = \frac{n}{{\rm gcd}(n,d)} = \frac{n}{d}= n$ if and only if $d =1$.
\end{proof}
\end{theorem}

\begin{corollary}\label{prime degree cyclic Nuc calc}
Let $K/F$ have prime degree $p$. Then
 ${\rm Nuc}(S_f) =K$ if and only if $m-\lambda_j$ is a multiple of $p$ for all $\lambda_j \in \Lambda$.
In other words, ${\rm Nuc}(S_f) = F$ if and only if there exists $\lambda_j \in \Lambda$ such that $m-\lambda_j$ is not divisible by $p$.
\end{corollary}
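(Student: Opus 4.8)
The plan is to specialize Theorem~\ref{ConditionforL=F} to the case $n=p$ prime. Recall that theorem gives ${\rm Nuc}(S_f)={\rm Fix}(\sigma^d)$ where $d=\gcd(m-\lambda_1,\dots,m-\lambda_r,n)$, and $[{\rm Nuc}(S_f):F]=\gcd(n,d)$. When $n=p$ is prime, the divisors of $d$ that matter are constrained: since $d$ is itself a gcd that includes $n=p$ as one of its arguments, we have $d\mid p$, so either $d=1$ or $d=p$. First I would observe that these are the only two possibilities, which immediately dichotomizes ${\rm Nuc}(S_f)$ into either $F$ (when $d=1$) or all of $K$ (when $d=p$, since ${\rm Fix}(\sigma^p)={\rm Fix}(\mathrm{id})=K$ as $\sigma$ has order $p$).

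Next I would translate the condition $d=p$ into the coefficient-index language. We have $d=\gcd(m-\lambda_1,\dots,m-\lambda_r,p)=p$ if and only if $p$ divides each $m-\lambda_j$, i.e. $p\mid(m-\lambda_j)$ for every $\lambda_j\in\Lambda$. This is precisely the statement that every $m-\lambda_j$ is a multiple of $p$, giving the first biconditional: ${\rm Nuc}(S_f)=K$ iff $m-\lambda_j$ is a multiple of $p$ for all $\lambda_j\in\Lambda$. The second biconditional is the logical contrapositive together with the dichotomy: ${\rm Nuc}(S_f)=F$ is the only alternative to ${\rm Nuc}(S_f)=K$, and $d=1$ (equivalently ${\rm Nuc}(S_f)=F$ by the "in particular" clause of Theorem~\ref{ConditionforL=F}) holds exactly when it is not the case that $p$ divides all $m-\lambda_j$, i.e. when some $\lambda_j\in\Lambda$ has $p\nmid(m-\lambda_j)$.

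I do not expect any genuine obstacle here; the corollary is essentially a direct reading of Theorem~\ref{ConditionforL=F} under the hypothesis that $n$ is prime. The only point requiring a line of care is confirming that $d\in\{1,p\}$ and hence that ${\rm Nuc}(S_f)$ takes exactly the two values $F$ and $K$ — this uses that $p$ prime forces a gcd involving $p$ to equal $1$ or $p$, and that ${\rm Fix}(\sigma^p)=K$ since $\mathrm{ord}(\sigma)=p$. Once that trichotomy is collapsed to a dichotomy, both equivalences follow by unwinding the divisibility condition and passing to its negation.
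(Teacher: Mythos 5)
Your proposal is correct and follows essentially the same route as the paper: both deduce ${\rm Nuc}(S_f)={\rm Fix}(\sigma^d)$ with $d=\gcd(m-\lambda_1,\dots,m-\lambda_r,p)$ from Theorem~\ref{ConditionforL=F}, note that primality of $p$ forces $d\in\{1,p\}$ with ${\rm Fix}(\sigma^p)=K$, and obtain the second biconditional as the contrapositive of the first. Your writeup is in fact slightly more careful than the paper's, which leaves the collapse of the dichotomy implicit.
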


\begin{proof}
We have ${\rm Nuc}(S_f) =K$ if and only if $[{\rm Nuc}(S_f):F]=p$, i.e. if and only if $d = p$. Now $d = {\rm gcd}(m-\lambda_1,m-\lambda_2,\dots,m-\lambda_r,p)=p$ if and only if $m-\lambda_j$ is a multiple of $p$ for all $\lambda_j \in \Lambda$. Since second assertion is equivalent to the first the result follows immediately.
\end{proof}

\begin{theorem}
Let $K/F$ be  of degree $n=bc<m$ for some $b \in \mathbb{N}$.
If  $[{\rm Nuc}(S_f):F]=c$ then $m=qc+r$ for some integers $q,r$ with $0\leq r < c$, and $f(t) = g(t^c)t^r$, where $g$ is a polynomial of degree $q$ in $K[t^c;\sigma^c]$.
\end{theorem}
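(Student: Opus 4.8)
The plan is to read off the positions of the nonzero coefficients of $f$ directly from the hypothesis $[L:F]=c$, and then to factor out a suitable power of $t$ on the right. Everything rests on the description of $L = {\rm Nuc}(S_f)$ in Theorem \ref{ConditionforL=F}, where $L = {\rm Fix}(\sigma^d)$ and $[L:F] = gcd(n,d)$ for the integer $d$ computed there as the greatest common divisor of the numbers $m-\lambda_j$ (over $\lambda_j \in \Lambda_f$) together with $n$. Since $n$ is itself one of the arguments of this gcd, we have $d \mid n$, so $gcd(n,d) = d$ and hence $[L:F] = d$. The hypothesis $[L:F]=c$ then forces $d = c$. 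This is the one genuine input; the rest is bookkeeping.

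Next I would extract the congruence condition on the exponents. Because $d=c$ is the greatest common divisor of the $m-\lambda_j$ and $n$, it divides each argument, so in particular $c \mid (m-\lambda_j)$ for every index $\lambda_j \in \Lambda_f$ of a nonzero coefficient; equivalently, $\lambda_j \equiv m \pmod c$. Writing $m = qc + r$ with $0 \le r < c$ by the division algorithm (and noting $q \ge 1$ since $m > n = bc \ge c$), this says $\lambda_j \equiv r \pmod c$. As $0 \le \lambda_j \le m-1$ and $0 \le r < c$, the smallest nonnegative residue $\equiv r \pmod c$ is $r$ itself, so we may write $\lambda_j = ck_j + r$ with integers $0 \le k_j \le q-1$. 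Thus every nonzero term of $f$, including the leading term $t^m = t^{qc+r}$, sits at an exponent of the form $ck+r$, and no exponent is less than $r$.

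Finally I would factor $t^r$ out on the right. Using $t^{ck_j}t^r = t^{ck_j + r}$ and distributivity,
\[
f(t) = t^{qc+r} - \sum_{\lambda_j \in \Lambda_f} a_{\lambda_j} t^{ck_j + r}
= \Big( t^{qc} - \sum_{\lambda_j \in \Lambda_f} a_{\lambda_j} t^{ck_j} \Big) t^r
= g(t^c)\,t^r,
\]
where $g(T) = T^q - \sum_{\lambda_j \in \Lambda_f} a_{\lambda_j} T^{k_j}$. The factor in parentheses involves only powers of $t^c$, so it lies in the subring $K[t^c;\sigma^c] \subseteq K[t;\sigma]$ (recall $t^c a = \sigma^c(a)t^c$), i.e.\ $g \in K[T;\sigma^c]$; and since $k_j \le q-1 < q$ for all $j$ while the coefficient of $T^q$ is $1$, we get $\deg g = q$, as required. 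I do not expect a serious obstacle here: the only substantive step is the identification $d=c$, after which the divisibility $c \mid (m-\lambda_j)$ and the right-factoring of $t^r$ are routine. The only points deserving care are checking that no negative powers of $t$ appear (guaranteed by $\lambda_j \ge r$) and confirming that $g$ has degree exactly $q$.
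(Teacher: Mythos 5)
Your proposal is correct and takes essentially the same approach as the paper: both use Theorem \ref{ConditionforL=F} to identify $d=c$, deduce $c\mid(m-\lambda_j)$ so that every exponent of a nonzero term has the form $ck+r$ with $0\le k\le q-1$, and then factor $t^r$ out on the right to get $f(t)=g(t^c)t^r$ with $g$ of degree $q$ in $K[t^c;\sigma^c]$. Your explicit observation that $d\mid n$ forces $[L:F]=d$ (hence $d=c$) is just a slightly more careful rendering of the step the paper phrases as an equivalence, not a different route.
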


\begin{proof}
By Theorem \ref{ConditionforL=F}, we have that ${\rm Nuc}(S_f) = {\rm Fix}(\sigma^d)$, where $d = {\rm gcd}(m- \lambda_1,m-\lambda_2,\dots,m-\lambda_r,n)$. Now $d=c$ if and only if $m-\lambda_j$ is a multiple of $c$ for all $\lambda_j \in \Lambda$.  But $m-\lambda_j$ is equal to a multiple of $c$ if and only if $\lambda_j = r + cl$ for some integer $l$ such that $0 \leq l < q$ (since $m=qc+r$). Therefore we obtain $\Lambda \subset  \lbrace r, r+c,r+2c,\dots,r+(q-1)c \rbrace.$ Thus
\begin{align*}
f(t) &= t^{qc+r} - a_{(q-1)c+r}t^{(q-1)c+r} - \dots - a_{r+c}t^{r+c} - a_rt^r\\
&= [(t^c)^{q} - a_{(q-1)c+r}(t^c)^{(q-1)} - \dots - a_{r+c}t^c - a_r]t^r= g(t^c)t^r
\end{align*}
where $g$ has degree $q$ in $K[t^c;\sigma^c]$.
\end{proof}

\begin{example} Let $K=\mathbb{Q}(\zeta)$, $\sigma:K\longrightarrow K,$ $\sigma(\zeta)=\zeta^2$, and $R$. Then $\sigma$ has order three, $F={\rm Fix}(\sigma)=\mathbb{Q}(\zeta^4+\zeta^2+\zeta)$, and $C(R)={\rm Fix}(\sigma)[x]$ with $x=t^3$. Every $f\in R$ is bounded. Moreover, $[K:F]=3$ and $[\mathbb{Q}(\zeta^4+\zeta^2+\zeta):\mathbb{Q}]=2$.
Let  $f\in \mathbb{Q}(\zeta)[t,\sigma]$ be monic of degree $m$, then
${\rm Nuc}(S_f) = \bigcap\limits_{j=1}^{r} {\rm Fix}(\sigma^{m-\lambda_j})\in \{K,F\}$ (Proposition \ref{Lsigma,f for a field extension}).
If  we have $m-i=3l$ for all $a_{i} \neq 0$ then ${\rm Nuc}(S_f)= K$,
else ${\rm Nuc}(S_f) = F$.
\\ (i)
Suppose that $f$ has degree $m=3q\geq 4$, then ${\rm Nuc}_r(S_f)$ contains a $F$-sub vector space
  of dimension $q[{\rm Nuc}(S_f):F]$.
If $f (t)= g(t^3)$ for some $g \in K[x]$, then ${\rm Nuc}(S_f)=K$ and $K[x]/(g(x))\cong K \oplus K t^3 \oplus K t^{6} \oplus \dots \oplus Kt^{3(q-1)}$ is an
$F$-sub vector space of ${\rm Nuc}_r(S_f)$. If this $f$ is also irreducible and not right invariant, then $a_0\not=0$,
and $[{\rm Nuc}_r(S_f):F]=m$.
Thus in this case
$${\rm Nuc}_r(S_f)\cong K[x]/(g(x)). $$
 (ii)
Suppose that $f\in \mathbb{Q}(\zeta^4+\zeta^2+\zeta)[t]$ is not right invariant and we have $m-i\not=3l$ for some $a_{i} \neq 0$. Then
$${\rm Nuc}(S_f)/{\rm Nuc}(S_f)f= \mathbb{Q}(\zeta^4+\zeta^2+\zeta)[t]/(f(t))\subset {\rm Nuc}_r(S_f).$$
In particular, if $f$ is irreducible in $R$ then
$${\rm Nuc}_r(S_f)= \mathbb{Q}(\zeta^4+\zeta^2+\zeta)[t]/(f(t)).$$
 \end{example}

%
%


\section{The case that only $\hat{h}(x)$ is irreducible  in $F[x]$}\label{sec: h reducible}


In this section we assume that $\sigma$ has finite order $n>1$, $f$ is bounded and that $\hat{h}$ is irreducible in $F[x]$. Then $f=f_1\cdots f_l$ for irreducible $f_i\in R$ such that $f_i\sim f_j$ for all $i,j$ (\cite{AO}, cf. \cite{TP21}). Let ${\rm deg}(f_i)=r $, then $m=rl$, and let $k$ be the number of irreducible factors of $h$ in $R$ (then $l\leq k$).

\begin{theorem}\label{thm:mainII}
For every $i$, $1\leq i \leq l$, $E(f_i)$ is a central division algebra over $E_{\hat{h}}$ of degree $s^{\prime}=n/k$ and
$$R/Rh \cong M_k(E(f_i)),\quad {\rm Nuc}_r(S_f) \cong M_l(E(f_i)).$$
 In particular, ${\rm Nuc}_r(S_f)$ is a central simple algebra over $E_{\hat{h}}$ of degree $s = ls^{\prime}$, $deg(\hat{h})=\frac{r}{s^{\prime}}=\frac{m}{s}$, $deg(h) = \frac{rn}{s^{\prime}} = \frac{mn}{s}$, and
 $$[{\rm Nuc}_r(S_f) :F] = l^2rs^{\prime} = ms.$$
 Moreover,  $s^{\prime}$ divides ${\rm gcd}(r,n)$, and $s$ and $l$ divide ${\rm gcd}(m,n)$.
\end{theorem}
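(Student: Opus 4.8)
The plan is to analyze the left $R$-module structure of $R/Rh$ and then identify ${\rm Nuc}_r(S_f)$ with an endomorphism ring. Since $\hat{h}(x)$ is irreducible in $F[x]$, the ideal $Rh$ is a maximal two-sided ideal, so $R/Rh$ is a simple artinian $F$-algebra with center $E_{\hat{h}}=F[x]/(\hat{h}(x))$, which is a field. By Artin--Wedderburn, $R/Rh\cong M_k(E(f_i))$, where $k$ counts the irreducible factors of $h$ and $E(f_i)$ is the endomorphism division algebra of a simple left $R$-module; this is exactly the content imported from Theorem \ref{thm:main2} applied to the irreducible $f_i$. First I would record that $E(f_i)$ is a central division algebra over $E_{\hat{h}}$ of degree $s^\prime=n/k$, that ${\rm deg}(\hat{h})=r/s^\prime$, and that ${\rm deg}(h)=rn/s^\prime$, all by reading off Theorem \ref{thm:main2} for the irreducible factor $f_i$ of degree $r$.

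The heart of the argument is the isomorphism ${\rm Nuc}_r(S_f)\cong M_l(E(f_i))$. The key identification is that the right nucleus (equivalently the eigenring $E(f)$) is the endomorphism ring of $R/Rf$ as a left $R$-module: ${\rm Nuc}_r(S_f)=E(f)\cong {\rm End}_R(R/Rf)^{\rm op}$ (or ${\rm End}_R(R/Rf)$, depending on the chosen convention, which I would fix at the outset). So I would next decompose $R/Rf$ into its simple constituents. Since $f=f_1\cdots f_l$ with all $f_i\sim f_j$ similar and each $f_i$ irreducible, every composition factor of $R/Rf$ is isomorphic to the single simple module $S$ underlying $f_i$, so as an $R$-module $R/Rf\cong S^{\oplus l}$. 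Then
$${\rm End}_R(R/Rf)\cong {\rm End}_R(S^{\oplus l})\cong M_l({\rm End}_R(S))\cong M_l(E(f_i)),$$
where the middle step is the standard matrix description of endomorphisms of a direct sum of copies of one simple module, and the last step uses ${\rm End}_R(S)\cong E(f_i)$. Lemma \ref{Lemma 3} is tailored precisely to justify these endomorphism-ring computations (block-diagonalization and the vanishing of cross-homomorphisms), so I would cite it to make the matrix identification clean even in the presence of non-isomorphic simples within the ambient $R/Rh$.

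The main obstacle I expect is proving, or correctly citing, that $R/Rf\cong S^{\oplus l}$ as a \emph{semisimple} $R$-module rather than merely having all composition factors isomorphic to $S$; a priori $R/Rf$ could be a non-split self-extension of $S$. This is where the hypothesis that $\hat{h}$ is irreducible does real work: because $h$ (the minimal central left multiple) annihilates $R/Rf$ and $R/Rh\cong M_k(E(f_i))$ is semisimple, the module $R/Rf$ is a module over the semisimple ring $R/Rh$, hence is itself semisimple, forcing $R/Rf\cong S^{\oplus l}$. I would make this the pivotal step: factor the $R$-action through $R/Rh$ and invoke semisimplicity. The similarity $f_i\sim f_j$ for all $i,j$ (quoted from \cite{AO, TP21}) guarantees there is only one isomorphism type $S$ appearing, pinning down the multiplicity as $l$.

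Once the isomorphism ${\rm Nuc}_r(S_f)\cong M_l(E(f_i))$ is in hand, the remaining assertions are bookkeeping. Since $E(f_i)$ is central simple (indeed division) over $E_{\hat{h}}$, so is $M_l(E(f_i))$, giving that ${\rm Nuc}_r(S_f)$ is central simple over $E_{\hat{h}}$ of degree $s=ls^\prime$. The dimension counts ${\rm deg}(\hat{h})=m/s$, ${\rm deg}(h)=mn/s$, and $[{\rm Nuc}_r(S_f):F]=l^2 r s^\prime=ms$ follow by substituting $m=rl$ and $s=ls^\prime$ into the relations already established for $E(f_i)$ and multiplying by $[E_{\hat{h}}:F]={\rm deg}(\hat{h})$. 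Finally, $s^\prime\mid\gcd(r,n)$ comes from Theorem \ref{thm:main2} applied to $f_i$ (where $s^\prime\mid r$ and $s^\prime=n/k\mid n$), and $s,l\mid\gcd(m,n)$ follows since $s\mid m$, $l\mid s\mid m$, and $s=ls^\prime$ divides $n$ because $ls^\prime\cdot{\rm deg}(\hat{h})=n\cdot({\rm something integral})$; more directly, $s\mid n$ because $ks^\prime=n$ and $l\mid k$, so $s=ls^\prime\mid ks^\prime=n$, whence both $s$ and its divisor $l$ divide $\gcd(m,n)$.
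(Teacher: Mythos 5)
Your proposal is correct and follows the same architecture as the paper's proof: identify ${\rm Nuc}_r(S_f)=E(f)$ with ${\rm End}_R(R/Rf)$, decompose $R/Rf$ into $l$ copies of the single simple module $S$ attached to the mutually similar irreducible factors $f_i$, conclude $E(f)\cong M_l(E(f_i))$ via the endomorphism-ring computations of Lemma \ref{Lemma 3}, import the structure of $E(f_i)$ from Theorem \ref{thm:main2}, and finish with dimension bookkeeping. The one genuine difference is how the pivotal decomposition $R/Rf\cong S^{\oplus l}$ is obtained. The paper gets it by citation: \cite[Theorem 1.2.9]{J96} for the factorization into similar irreducibles, \cite[Corollary 4.7]{GLN13} for the direct-sum decomposition $R/Rf\cong R/Rf_1\oplus\cdots\oplus R/Rf_l$, and \cite[pg.~38]{jacobson1943theory} for $Rh={\rm Ann}_R(R/Rf)$ so that ${\rm End}_R={\rm End}_{R/Rh}$. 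You instead prove semisimplicity directly: $h$ is central and lies in $Rf$, so the two-sided ideal $Rh$ annihilates $R/Rf$, the action factors through the simple Artinian ring $R/Rh\cong M_k(E(f_i))$, and every module over a semisimple ring is semisimple, with the isotype and multiplicity $l$ pinned down by similarity and a length count. This is self-contained, arguably cleaner, and automatically subsumes the paper's separate annihilator step; you also correctly identified the danger (a non-split self-extension of $S$) that the paper's citation is implicitly excluding.

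One caveat concerns the last divisibility claim. Your parenthetical ``$ls^{\prime}\cdot{\rm deg}(\hat{h})=n\cdot(\text{something integral})$'' does not parse, and your ``more directly'' argument asserts $l\mid k$ without proof. Since $n=ks^{\prime}$ and $s=ls^{\prime}$, the assertion $s\mid n$ is \emph{equivalent} to $l\mid k$, so you have not derived it but restated it. To be fair, the paper's proof is no stronger at this point: it asserts $[S_f:F]=b\,[{\rm Nuc}_r(S_f):F]$ with $b$ a positive integer, and since $b=mn/(ms)=n/s=k/l$, the integrality of $b$ is again exactly the claim $l\mid k$ (it amounts to $S_f$ having integral rank over the now merely simple, not division, algebra ${\rm Nuc}_r(S_f)$, which is not automatic). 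So your write-up matches the paper at its weakest step; a fully rigorous treatment would have to justify $l\mid k$ (equivalently the integrality of $b$), which neither you nor the paper does.
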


\begin{proof}
Since $h$ is a two-sided maximal element in $R$, the irreducible factors $h_i$ of any factorization $h=h_1\cdots h_k$ of $h$ in $R$ are all similar. Now  $h(t)=p(t)f(t)$ for some $p(t)\in R$ and so comparing the irreducible factors of $f$ and $h$ and employing \cite[Theorem 1.2.9]{J96}, we see that $f=f_1\cdots f_l$ for irreducible $f_i\in R$ such that $f_i\sim f_j$ for all $i,j$ (and also $f_i\sim h_j$ for all $i,j$), with $l\leq k$.
In particular, $R/Rf_i \cong R/Rf_j$ for all $i,j$.
Moreover, $R/Rh \cong M_k(E(f_i))$ is a simple Artinian ring \cite[Theorem 1.2.19]{J96}.
 Each of the polynomials $h_i$, resp., $f_i$, has minimal central left multiple $h$ \cite[Proposition 5.2]{GLN13}.
Let $A=R/Rh$.
We obtain
 $$R/Rf \cong R/Rf_1 \oplus R/Rf_2 \oplus \cdots \oplus R/Rf_l$$
as a direct sum of simple left $A$-modules (e.g. see \cite[Corollary 4.7]{GLN13}).
Let $g$ be an irreducible factor of $h$ in $R$. Since $R/Rf_i \cong R/Rg$, we get
$R/Rf \cong (R/Rg)^{\oplus l}$
as left $A$-modules.
By \cite[Exercise 6.7.2, Lemma 6.7.5]{sullivan2004} we have
$${\rm End}_A(R/Rf) \cong {\rm End}_A((R/Rg)^{\oplus l}) \cong M_l({\rm End}_A(R/Rg))$$
as rings.

Since $h$ is the minimal central left multiple of $f$ and of $g$,  $Rh = {\rm Ann}_R(R/Rf) = {\rm Ann}_R(R/Rg)$ \cite[pg.~38]{jacobson1943theory}, hence ${\rm End}_R(R/Rf)={\rm End}_A(R/Rf)$, ${\rm End}_R(R/Rg) = {\rm End}_A(R/Rg)$, 
and
$${\rm End}_R(R/Rf) \cong M_l({\rm End}_R(R/Rg)).$$
 Finally,   $E(g)\cong {\rm End}_R(R/Rg)$, therefore
$$E(f) \cong  M_l(E(g)).$$
Since $g$ is irreducible of degree $r$ with minimal central left multiple $h(t) = \hat{h}(t^n)$, $E(g)$ is a central division algebra over $E_{\hat{h}}$ of degree $s^{\prime}=n/k$, where $k$ is the number of irreducible divisors of $h$ in $R$, ${\rm deg}(\hat{h})=\frac{r}{s^{\prime}}=\frac{m}{s}$ and ${\rm deg}(h)=\frac{rn}{s^{\prime}}=\frac{mn}{s}$ by Theorem \ref{thm:main2}. Finally, since $E(f) \cong M_l(E(g)),$
$E(f)$ is a central simple algebra over $E_{\hat{h}}$ of degree $s = ls^{\prime}$, and $[E(f) : F] = s^2{\rm deg}(\hat{h}) = ms.$
The assertion follows since $E(f_i)=E(g)$.

Now
$s^{\prime}=n/k$, and ${\rm deg}(\hat{h})= r/s^{\prime}$, i.e. $s^{\prime}$ divides both $n$ and $r$, hence $s^{\prime}$ divides ${\rm gcd}(n,r)$.
Next, $s$ divides ${\rm gcd}(m,n)$:
${\rm deg}(\hat{h})=m/s $ means  $s$ divides $m$. Also $[S_f:F] = b[{\rm Nuc}_r(S_f):F]$ for some positive integer $b$. We know that $[S_f:F]=mn$  and that $[{\rm Nuc}_r(S_f):F]=ms$, hence $mn=bms$. Cancelling $m$ yields $n=bs$, i.e. $s$ divides $n$. The result follows immediately.
Finally, $l$ divides ${\rm gcd}(m,n)$:
Since $s=ls^{\prime}$, $l$ divides $s$. Hence $l$ divides ${\rm gcd}(m,n)$ by the above.
\end{proof}

Comparing $F$-vector space dimensions, we  obtain that $[S_f:{\rm Nuc}_r(S_f)]=k/l.$

\begin{corollary}\label{cor:new}
Suppose that $\hat{h}(x)$ is irreducible in $F[x]$.
\\
 (i) If $m$ is prime, then one of the following holds:
\\ (a) ${\rm Nuc}_r(S_f) \cong E_{\hat{h}}$ is a field extension of $F$ of degree $m$,
\\ (b) ${\rm Nuc}_r(S_f)$ is a central division algebra over $F$ of degree $m$,
\\ (c) ${\rm Nuc}_r(S_f) \cong M_m(F)$.
\\ (ii) If ${\rm gcd}(m,n)=1$, or $n$ is prime and $f$ not right invariant, then $f$ is irreducible and
${\rm Nuc}_r(S_f) \cong E_{\hat{h}}$
is a field extension of $F$ of degree $m={\rm deg}(\hat{h})$, and ${\rm deg}(h)=mn$.
 \end{corollary}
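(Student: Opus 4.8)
The plan is to read everything off the structural description in Theorem \ref{thm:mainII}, which already tells us that ${\rm Nuc}_r(S_f)\cong M_l(E(f_i))$ is a central simple algebra over $E_{\hat{h}}$ of degree $s=ls'$ with ${\rm deg}(\hat{h})=m/s$, where $s'=n/k$ is the degree of the central division algebra $E(f_i)$ over $E_{\hat{h}}$, and where $f=f_1\cdots f_l$ with all $f_i$ similar irreducible of degree $r=m/l$. The divisibility data I will lean on are $s'\mid gcd(r,n)$ and $s,l\mid gcd(m,n)$, together with $l\leq k$. The one extra ingredient is the dimension count $[S_f:{\rm Nuc}_r(S_f)]=k/l$ recorded immediately after that theorem: since ${\rm Nuc}_r(S_f)=S_f$ would force every associator to vanish and hence make $S_f$ associative (i.e. $f$ right invariant), non-right-invariance of $f$ translates into the strict inequality $l<k$.

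For part (i) I use that ${\rm deg}(\hat{h})=m/s$ is a positive integer, so $s\mid m$, and primality of $m$ gives $s\in\{1,m\}$. If $s=1$, then ${\rm Nuc}_r(S_f)$ is a degree-one central simple algebra over $E_{\hat{h}}$, hence equals $E_{\hat{h}}$, and ${\rm deg}(\hat{h})=m$ makes it a degree-$m$ field extension of $F$: case (a). If $s=m$, then ${\rm deg}(\hat{h})=1$, so $E_{\hat{h}}\cong F$ and ${\rm Nuc}_r(S_f)=M_l(E(f_i))$ is central simple over $F$ of degree $m=ls'$. Primality of $m$ then forces either $(l,s')=(1,m)$, giving the central division algebra of case (b), or $(l,s')=(m,1)$, giving $E(f_i)\cong F$ and ${\rm Nuc}_r(S_f)\cong M_m(F)$ of case (c).

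For part (ii) the target in every case is $l=1$ (so $f=f_1$ is irreducible) and $s=1$ (so ${\rm Nuc}_r(S_f)\cong E_{\hat{h}}$). When $gcd(m,n)=1$ this is immediate, since $s\mid gcd(m,n)=1$ and $l\mid gcd(m,n)=1$. When $n$ is prime and $f$ is not right invariant, I argue that $s'=n/k$ a positive integer forces $k\mid n$, hence $k\in\{1,n\}$; non-right-invariance gives $l<k$, which excludes $k=1$ and so pins $k=n$; then $l<n$ with $l\mid gcd(m,n)\mid n$ and $n$ prime yields $l=1$. Consequently $s'=n/k=1$ and $s=ls'=1$, so ${\rm Nuc}_r(S_f)\cong E_{\hat{h}}$ is a field extension of degree ${\rm deg}(\hat{h})=m/s=m$ with ${\rm deg}(h)=mn/s=mn$.

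The step I expect to carry the weight is this reduction $l<k$ in the prime case. The divisibility constraints of Theorem \ref{thm:mainII} alone are consistent with the configuration $l=n$, $s'=1$ — that is, $f$ splitting into $n$ similar irreducible factors with ${\rm Nuc}_r(S_f)\cong M_n(E_{\hat{h}})$ — so the purely numerical bounds do not suffice to conclude irreducibility of $f$. It is exactly the identity $[S_f:{\rm Nuc}_r(S_f)]=k/l$, via the observation that ${\rm Nuc}_r(S_f)=S_f$ is equivalent to associativity and hence to right invariance, that excludes $k=l$ and collapses this branch to $l=1$. (Once $l=1$ is in hand one could instead quote Theorem \ref{thm:main2}(ii) to get $s=1$, but I prefer the self-contained route through $k=n$.)
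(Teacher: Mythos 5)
Your proof is correct and takes essentially the approach the paper intends: the corollary is stated there without a separate proof, as an immediate consequence of the numerical data in Theorem \ref{thm:mainII} ($s\mid m$, $s,l\mid \gcd(m,n)$, $s'=n/k$) together with the dimension count $[S_f:{\rm Nuc}_r(S_f)]=k/l$ recorded right after it. Your observation that non-right-invariance forces $l<k$ (since $l=k$ would give ${\rm Nuc}_r(S_f)=S_f$, hence associativity), which pins $k=n$ and $l=1$ in the prime-$n$ case, is precisely the one nontrivial step the paper leaves implicit, and you execute it correctly.
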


\begin{corollary}\label{cor: L_f subalgebraII}
 Suppose that $f \in F[t]\subset R$ is not right invariant, and that $\hat{h}(x)$ is irreducible in $F[x]$.
 \\ (i) If $\hat{h}(x)$ is irreducible and $[N : F]=ln/k$, then
  ${\rm Nuc}_r(S_f)= N[t;\sigma]/N[t;\sigma]f.$
  \\ (ii) If $[N:F]>\frac{nl}{k}$, then $\hat{h}(x)$ is reducible, and therefore $f$ as well.
 \end{corollary}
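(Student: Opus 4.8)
The plan is to turn both parts into an $F$-dimension count, using Theorem \ref{L_f subalgebra of the right nucleus}, which already guarantees that $L[t;\sigma]/L[t;\sigma]f$ is a subalgebra of ${\rm Nuc}_r(S_f)$ for $f \in F[t]$. The first thing I would record is the dimension of this subalgebra. As noted in the proof of Theorem \ref{L_f subalgebra of the right nucleus}, $L={\rm Nuc}(S_f)$ is a $\sigma$-stable subfield of $K$ (if $\sigma^d(x)=x$ then $\sigma^d(\sigma(x))=\sigma(\sigma^d(x))=\sigma(x)$), so $L[t;\sigma]$ is a skew polynomial ring and right division by the monic polynomial $f$ is available in it. Consequently $1,t,\dots,t^{m-1}$ is a left $L$-basis of $L[t;\sigma]/L[t;\sigma]f$, whence
$$[L[t;\sigma]/L[t;\sigma]f : F] = m\,[L:F].$$

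For part (i), the standing assumption that $\hat h$ is irreducible lets me apply Theorem \ref{thm:mainII}: there $s = ls^{\prime} = ln/k$ and $[{\rm Nuc}_r(S_f):F]=ms$. The extra hypothesis $[L:F]=ln/k=s$ then gives the subalgebra $L[t;\sigma]/L[t;\sigma]f$ the $F$-dimension $m[L:F]=ms=[{\rm Nuc}_r(S_f):F]$. Since a subalgebra of a finite-dimensional algebra that attains the full dimension must coincide with it, I conclude ${\rm Nuc}_r(S_f) = L[t;\sigma]/L[t;\sigma]f$.

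For part (ii), I would argue by contradiction. Suppose $\hat h$ were irreducible; then exactly as in (i), Theorem \ref{thm:mainII} gives $[{\rm Nuc}_r(S_f):F]=ms = m\cdot nl/k$. But the subalgebra containment of Theorem \ref{L_f subalgebra of the right nucleus} forces $m[L:F]\le[{\rm Nuc}_r(S_f):F]$, i.e. $[L:F]\le nl/k$, contradicting the hypothesis $[L:F]>nl/k$. Hence $\hat h$ is reducible. Reducibility of $f$ then follows as the contrapositive of the fact recorded in Section \ref{sec:first main results} that irreducibility of $f$ in $R$ implies irreducibility of $\hat h$ in $F[x]$.

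The only genuinely delicate point is the bookkeeping of $k$ and $l$ in (ii): these integers are defined only once $\hat h$ is assumed irreducible (so that $Rh$ is a maximal two-sided ideal and the irreducible factors of $f$ and of $h$ are mutually similar, giving well-defined $l\le k$). The expression $nl/k$ in the hypothesis must therefore be read under that provisional assumption, which the dimension comparison then refutes. Everything else is routine: the containment from Theorem \ref{L_f subalgebra of the right nucleus} together with the value $[{\rm Nuc}_r(S_f):F]=ms$ from Theorem \ref{thm:mainII}, plus the observation that $L[t;\sigma]/L[t;\sigma]f$ is free of rank $m$ over $L$.
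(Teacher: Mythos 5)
Your proposal is correct and follows essentially the same route as the paper: both parts rest on comparing the $F$-dimension $m[L:F]$ of the subalgebra $L[t;\sigma]/L[t;\sigma]f$ from Theorem \ref{L_f subalgebra of the right nucleus} with $[{\rm Nuc}_r(S_f):F]=ms=mnl/k$ from Theorem \ref{thm:mainII}, concluding equality in (i) and a contradiction in (ii) exactly as the paper does. Your closing remark that $k$ and $l$ are only defined under the provisional assumption that $\hat{h}$ is irreducible is a correct and careful reading of the statement, matching how the paper's own proof of (ii) proceeds.
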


\begin{proof}
(i) We know that $N[t;\sigma]/N[t;\sigma]f(t)$
is a subalgebra of ${\rm Nuc}_r(S_f)$  of dimension $\frac{lnm}{k}$ over $F$ (Theorem \ref{L_f subalgebra of the right nucleus}). If $\hat{h}(x)$ is irreducible then ${\rm Nuc}_r(S_f)$ has degree $ms=mln/k$ over $F$ by Theorem \ref{thm:mainII}, therefore comparing the dimensions of the vector spaces we obtain the assertion.
\\ (ii)
If $f(t)  \in F[t]\subset R$  then
 $N[t;\sigma]/N[t;\sigma]f$
 has dimension $m[N:F]$ over $F$ and is a subalgebra of ${\rm Nuc}_r(S_f)$ by Theorem \ref{L_f subalgebra of the right nucleus}.
   Suppose that $\hat{h}$ is irreducible, then ${\rm Nuc}_r(S_f)$ has dimension $\frac{mnl}{k}$ as an $F$-vector space (Theorem \ref{thm:mainII}).
   In particular, this implies $\frac{mnl}{k}=[{\rm Nuc}_r(S_f):F]\geq m[N:F]$, a contradiction if  $[N:F]>\frac{nl}{k}$.
\end{proof}

As a direct consequence of Proposition \ref{t^s in Nuc}, we obtain:

\begin{theorem}\label{thm:main6}
Suppose that $f(t) = t^m-\sum_{i=0}^{m-1}a_it^i \in {\rm Fix}(\sigma^c)[t;\sigma]$ for some minimal $c \in \lbrace 1,2,\dots,m-1 \rbrace$. Suppose that $f$ is not right invariant and that $\hat{h}(x)$ is irreducible in $F[x]$.
\\ (i) If $m=qc$ for some positive integer $q$ and $[N:F]>\frac{cnl}{k}$, then $f$ is reducible.
\\ (ii) If $m = qc + r$ for some positive integers $q,r$ with $0 < r < c$, and $[N:F]\geq \frac{cnl}{k}$
then $f$ is reducible.
\end{theorem}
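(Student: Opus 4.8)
The plan is to run the dimension-counting argument of Corollary~\ref{cor: L_f subalgebraII}, but with the subalgebra $L[t;\sigma]/L[t;\sigma]f$ used there replaced by the explicit $F$-subspace furnished by Proposition~\ref{t^s in Nuc}. So I would suppose $\hat{h}$ is irreducible in $F[x]$, invoke Theorem~\ref{thm:mainII} to get the exact value $[{\rm Nuc}_r(S_f):F]=\frac{mnl}{k}$ (here $l$ is the number of mutually similar irreducible factors of $f$ and $k$ the number of irreducible factors of $h$), and then simply compare this number with $\dim_F V$ for the subspace $V\subseteq{\rm Nuc}_r(S_f)$ that Proposition~\ref{t^s in Nuc} produces from $f(t)\in{\rm Fix}(\sigma^c)[t;\sigma]$ (taking $s=c$). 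A contradiction there forces $\hat{h}$, and hence $f$, to be reducible, exactly as in Corollary~\ref{cor: L_f subalgebraII}.

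For part (i), where $m=qc$, Proposition~\ref{t^s in Nuc}(i) places $L\oplus Lt^c\oplus\cdots\oplus Lt^{(q-1)c}$ inside $V$. These $q$ summands lie in pairwise distinct degrees $0,c,\dots,(q-1)c$, all strictly below $m$, so each is a copy of $L$ and they are $F$-independent, giving $\dim_F V\ge q[L:F]$. The hypothesis $[L:F]>\frac{cnl}{k}$ then forces
\[\dim_F V\ge q[L:F]>\frac{qcnl}{k}=\frac{mnl}{k}=[{\rm Nuc}_r(S_f):F],\]
which is impossible as $V\subseteq{\rm Nuc}_r(S_f)$. Thus $\hat{h}$ is reducible, and since an irreducible $f$ has irreducible $\hat{h}$, the polynomial $f$ is reducible.

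For part (ii), where $m=qc+r$ with $0<r<c$, Proposition~\ref{t^s in Nuc}(ii) gives $V=L\oplus Lt^c\oplus\cdots\oplus Lt^{qc}$. Now there are $q+1$ summands in the distinct degrees $0,c,\dots,qc$, all still below $m$ because $qc<qc+r$, so $\dim_F V=(q+1)[L:F]$. Here the non-strict hypothesis $[L:F]\ge\frac{cnl}{k}$ already suffices, since
\[\dim_F V=(q+1)[L:F]\ge\frac{(q+1)cnl}{k}=\frac{(qc+c)nl}{k}>\frac{(qc+r)nl}{k}=\frac{mnl}{k},\]
the last inequality being strict precisely because $r<c$. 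This again contradicts $V\subseteq{\rm Nuc}_r(S_f)$, so $\hat{h}$ and hence $f$ are reducible.

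The only real care is in the dimension bookkeeping and the resulting $>$ versus $\ge$ dichotomy. In part (i) the would-be top block $Lt^{m}=L(\sum a_it^i)$ from Proposition~\ref{t^s in Nuc}(i) is reduced to degree $<m$ and need not be $F$-independent of the lower blocks, so I can only rely on the $q$ blocks $L,Lt^c,\dots,Lt^{(q-1)c}$, which is why a strict hypothesis is needed to beat $\frac{mnl}{k}=\frac{qcnl}{k}$. In part (ii) the block $Lt^{qc}$ is genuinely present and $F$-independent, and the gap $(q+1)c-m=c-r>0$ converts the non-strict bound into a strict contradiction. Beyond this, nothing deep is required: the argument is a direct comparison of the lower bound on $[{\rm Nuc}_r(S_f):F]$ coming from Proposition~\ref{t^s in Nuc} with the exact value supplied by Theorem~\ref{thm:mainII}.
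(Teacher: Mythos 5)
Your proposal is correct and follows essentially the same route as the paper's own proof: both assume $\hat{h}$ irreducible, take the exact value $[{\rm Nuc}_r(S_f):F]=\frac{mnl}{k}$ from Theorem \ref{thm:mainII}, compare it with the dimension $q[L:F]$ (resp.\ $(q+1)[L:F]$) of the subspace supplied by Proposition \ref{t^s in Nuc} with $s=c$, and obtain the same strict versus non-strict contradiction, concluding that $\hat{h}$ and hence $f$ must be reducible. Your added remark explaining why the block $L(\sum_{i} a_it^i)$ must be discarded in part (i) is a sound clarification of a point the paper leaves implicit, but it does not change the argument.
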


\begin{proof}
Since $\hat{h}$ is irreducible in $F[x]$, then  the right nucleus has dimension $\frac{mnl}{k}$ as an $F$-vector space.
 \\ (i) If $m=qc$ for some positive integer $q$, then
$$N \oplus Nt^c \oplus Nt^{2c} \oplus \dots \oplus Nt^{(q-1)c}$$ is an $F$-sub vector space of ${\rm Nuc}_r(S_f)$ of dimension $q[N:F]$.
\\ (ii) If $m = qc + r$ for some positive integers $q,r$ with $0 < r < c$, then
$$N \oplus Nt^c \oplus Nt^{2c} \oplus \dots \oplus Nt^{qc} $$
is an $F$-sub vector space of ${\rm Nuc}_r(S_f)$ of dimension $(q+1)[N:F]$.
 \\
 If $[N:F]>\frac{cnl}{k}$ in (i), then $q[N:F]>\frac{mnl}{k}$, a contradiction.
  If $[N:F]\geq \frac{cnl}{k}$ in (ii), then $(q+1)[N:F]\geq q\frac{cnl}{k}+\frac{cnl}{k}>\frac{mnl}{k}$, a contradiction. Thus
   $\hat{h}$ must be reducible, and therefore $f$, too.
\end{proof}

%
%


\section{The right nucleus of $S_f$ for low degree polynomials in $F[t]\subset K[t;\sigma]$}\label{sec:low}


We assume that $K/F$ is a cyclic Galois field extension of degree $n$  with ${\rm Gal}(K/F)= \langle \sigma \rangle$.
We now explore the structure of ${\rm Nuc}_r(S_f)$ for $f\in F[t]\subset R$ of low degree (the same arguments apply for higher degrees).
 We repeatedly use that  $ [{\rm Fix}(\sigma^s):F]  = {\rm gcd}(n,s)$, $N = \bigcap\limits_{\lambda_j \in \Lambda}{\rm Fix}(\sigma^{m-\lambda_j})$ by Theorem \ref{Coeffs} and Corollary \ref{Lsigma,f for a field extension}.
We also use that if $f \in F[t]\subset R$  then $  N[t;\sigma]/N[t;\sigma]f$
is a subalgebra of ${\rm Nuc}_r(S_f)$  (Theorem \ref{L_f subalgebra of the right nucleus}).


\subsection{$m=2$} \label{Example m=2}


Let $f(t)=t^2-a_1t-a_0 \in R$, then
$N = \bigcap\limits_{\lambda_j \in \Lambda}{\rm Fix}(\sigma^{2-\lambda_j})$.

\begin{enumerate}
\item If $f(t)=t^2-a_0$ with $a_0 \in K^\times$, then  $N = {\rm Fix}(\sigma^2)$.
\item If $f(t)=t^2-a_1t-a_0$ with $a_1\in K^\times$, then  $N = F $.
\end{enumerate}

 Note that if $n $ is even, then $\sigma^2$ has order $\frac{n}{2}$ in ${\rm Gal}(K/F)$, which means that $F \neq {\rm Fix}(\sigma^2)$. If $n$ is odd, then ${\rm gcd}(n,2)=1$, therefore ${\rm Fix}(\sigma^2) = F$.

\begin{proposition}
Let $f(t)=t^2-a_0 \in F[t] \subset R$, $a_0\not=0$ then
${\rm Fix}(\sigma^2)[t;\sigma]/{\rm Fix}(\sigma^2)[t;\sigma]f$
 is a subalgebra of ${\rm Nuc}_r(S_f)$ of dimension $2[{\rm Fix}(\sigma^2):F]$ over $F$.
 In particular, if $n$ is prime or odd,  then $f$ is reducible.
\begin{proof}
$N[t;\sigma]/N[t;\sigma]f$
 is a subalgebra of ${\rm Nuc}_r(S_f)$ and
 $N = {\rm Fix}(\sigma^2)$ by (1), which yields the first assertion.
The second assertion follows from the fact that the right nucleus has dimension 2 over $F$ for irreducible right invariant $f$ under our assumptions.
\end{proof}
\end{proposition}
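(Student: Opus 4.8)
The plan is to treat the two assertions separately. For the subalgebra statement I would apply Theorem \ref{L_f subalgebra of the right nucleus} directly: since $a_0 \in F$ we have $f(t) = t^2 - a_0 \in F[t] \subset K[t;\sigma]$, so that theorem already gives that ${\rm Nuc}(S_f)[t;\sigma]/{\rm Nuc}(S_f)[t;\sigma]f$ is a subalgebra of ${\rm Nuc}_r(S_f)$. To put this in the asserted form I would identify ${\rm Nuc}(S_f)$: the only nonzero lower coefficient of $f$ is $a_0$, so $\Lambda_f = \{0\}$ and Proposition \ref{Lsigma,f for a field extension}(i) (equivalently case (1) above) yields ${\rm Nuc}(S_f) = {\rm Fix}(\sigma^{2-0}) = {\rm Fix}(\sigma^2)$. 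Finally, right division by the degree-two polynomial $f$ shows that ${\rm Fix}(\sigma^2)[t;\sigma]/{\rm Fix}(\sigma^2)[t;\sigma]f$ is free of rank two over ${\rm Fix}(\sigma^2)$, with representatives $1$ and $t$; hence its dimension over $F$ is $2[{\rm Fix}(\sigma^2):F]$, as claimed. This part is routine and uses no hypothesis on $n$ beyond $f$ being not right invariant.

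For the reducibility statement I would argue by contradiction, invoking the structure theory for irreducible polynomials of prime degree. Suppose $f$ is irreducible. Since $f$ is monic of prime degree $m=2$ and, by the standing hypothesis of this section, not right invariant, Corollary \ref{cor:main2} applies and leaves only two possibilities: either (i) ${\rm Nuc}_r(S_f) \cong E_{\hat{h}}$ is a field with $[{\rm Nuc}_r(S_f):F]=2$, or (ii) ${\rm Nuc}_r(S_f)$ is a central division algebra over $F$ with $[{\rm Nuc}_r(S_f):F]=4$, the latter occurring only when $2 \mid n$. Under the hypothesis that $n$ is prime or odd I would first eliminate case (ii): if $n$ is odd then $2 \nmid n$; and if $n$ is prime then $n=2$ is excluded, since for $n=2$ the polynomial $f=t^2-a_0\in F[t]$ is right invariant, so $n$ is an odd prime and again $2 \nmid n$. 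Thus case (ii) cannot occur, and case (i) forces $[{\rm Nuc}_r(S_f):F]=2$ with ${\rm Nuc}_r(S_f)$ a field. Since $[{\rm Fix}(\sigma^2):F]=gcd(n,2)=1$ by Theorem \ref{ConditionforL=F}, the subalgebra from the first part is $F[t]/(f)\subseteq {\rm Nuc}_r(S_f)$, and I would then aim to exhibit a nontrivial zero divisor in it (equivalently, a proper factor of $f$), contradicting that ${\rm Nuc}_r(S_f)$ is a division algebra and thereby forcing $f$ to be reducible.

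The main obstacle is precisely this final step. For $n$ odd the embedded subalgebra $F[t]/(f)$ already has $F$-dimension $2[{\rm Fix}(\sigma^2):F]=2$, which coincides with the dimension of ${\rm Nuc}_r(S_f)$ in the surviving case (i); so a bare dimension count does not by itself produce the desired zero divisor, and one cannot pass from dimensions alone to reducibility. To close the gap I would exploit the explicit multiplication of $S_f$: by Theorem \ref{Petit(5)}, all coefficients of $f$ lying in ${\rm Fix}(\sigma)$ means $t \in {\rm Nuc}_r(S_f)$, where it satisfies $t\circ t = a_0$, i.e. $t$ is a root of $X^2 = a_0$ inside the associative algebra ${\rm Nuc}_r(S_f)$. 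From such a root $b\in {\rm Nuc}_r(S_f)$ one would try to read off the factorization $f=(t-b)(t+b)$ and hence a proper factor. Producing this root, or otherwise ruling out that the copy of $F[t]/(f)$ remains a field, is the delicate point on which the whole reducibility claim turns, and is where I expect the substantive work of the proof to concentrate.
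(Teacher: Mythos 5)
Your treatment of the first assertion is correct and is exactly the paper's argument: Theorem \ref{L_f subalgebra of the right nucleus} gives the subalgebra, $\Lambda_f=\lbrace 0\rbrace$ together with Proposition \ref{Lsigma,f for a field extension} gives ${\rm Nuc}(S_f)={\rm Fix}(\sigma^2)$, and the representatives $1,t$ give the dimension $2[{\rm Fix}(\sigma^2):F]$.

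On the second assertion, the obstacle you isolated is not a defect of your attempt but a genuine error in the proposition and in the paper's own proof. The paper's justification is precisely the dimension count you discarded: if $f$ were irreducible then $[{\rm Nuc}_r(S_f):F]=2$ (Theorem \ref{thm:main2} (ii), Corollary \ref{cor:main1} (ii); the phrase ``irreducible right invariant'' in the paper's proof is evidently a typo for ``irreducible, not right invariant''), to be contradicted by a subalgebra of dimension $2[{\rm Fix}(\sigma^2):F]=2\gcd(n,2)$. But these two numbers differ only when $n$ is even, whereas the hypothesis ``$n$ prime or odd'' leaves only odd $n$ in play: for $n=2$ one has $t^2-a_0\in F[t^2]=C(R)$, so $f$ is right invariant, against the standing assumption of the section. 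For odd $n$ one has ${\rm Fix}(\sigma^2)=F$ and both dimensions equal $2$, so the paper's proof establishes nothing in any non-vacuous case --- exactly your observation. Moreover, the gap cannot be closed, because the claim is false: since $\deg f=2$, $f$ is reducible if and only if $f=(t-\sigma(c))(t+c)$ for some $c\in K$, i.e.\ if and only if $c\,\sigma(c)=a_0$ has a solution in $K$; applying $N_{K/F}$ gives $N_{K/F}(c)^2=a_0^n$, so for odd $n$ this forces $a_0\in(F^\times)^2$ (and conversely $a_0=e^2$ with $e\in F$ yields $f=(t-e)(t+e)$). Thus for odd $n$, $f=t^2-a_0$ is reducible if and only if $a_0$ is a square in $F$; in particular $f(t)=t^2-2\in\mathbb{Q}[t]\subset K[t;\sigma]$, with $K/\mathbb{Q}$ any cyclic cubic extension, is irreducible and not right invariant while $n=3$ is an odd prime --- a counterexample to the ``in particular'' clause. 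Consistently, in that example ${\rm Nuc}_r(S_f)\cong F[t]/(t^2-2)=\mathbb{Q}(\sqrt{2})$ is a field, which also shows that your fallback plan --- extracting a square root of $a_0$, i.e.\ a zero divisor, from the embedded copy of $F[t]/(f)$ --- must fail in general. Your closing remark that everything turns on whether $F[t]/(f)$ remains a field was exactly right: it can remain a field, and then $f$ is irreducible.
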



\subsection{$m=3$} \label{Example m=3}


Let $f(t) \in R$ be of degree $3$, then
 $N = \bigcap\limits_{\lambda_j \in \Lambda}{\rm Fix}(\sigma^{3-\lambda_j}) $.

\begin{enumerate}
\item If $f(t) = t^3-a_0 \in R$, where $a_0 \in K^\times$, then  $N = {\rm Fix}(\sigma^3) $.
\item If $f(t) = t^3-a_1t \in R$, where $a_1 \in K^\times$, then  $N =  {\rm Fix}(\sigma^2)$.
\item In all other cases,  $N = F$.
\end{enumerate}

\begin{proposition}
(i) If $f(t)=t^3-a_0$ with $0\not=a_0 \in F$, then
$${\rm Fix}(\sigma^3)[t;\sigma]/{\rm Fix}(\sigma^3)[t;\sigma] f$$
is a subalgebra of ${\rm Nuc}_r(S_f)$ of dimension $3[{\rm Fix}(\sigma^3):F]$ over $F$.
In particular, if $n$ is prime or not divisible by 3,  
then $f$ is reducible.
\\ (ii) If $f(t)=t^3-a_1t $ with $0\not=a_1 \in F$, then
$${\rm Fix}(\sigma^2)[t;\sigma]/{\rm Fix}(\sigma^2)[t;\sigma]f$$
is a subalgebra of ${\rm Nuc}_r(S_f)$ of dimension $3[{\rm Fix}(\sigma^2):F]$ over $F$.
\end{proposition}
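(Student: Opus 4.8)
The plan is to handle both parts by the template already used for degree two in Section~\ref{Example m=2}: determine $L={\rm Nuc}(S_f)$ from the case list preceding the statement, embed the quotient supplied by Theorem~\ref{L_f subalgebra of the right nucleus} into ${\rm Nuc}_r(S_f)$, and then read off its $F$-dimension. For (i) the only nonzero lower coefficient of $f(t)=t^3-a_0$ sits in degree $0$, so $\Lambda_f=\{0\}$ and Proposition~\ref{Lsigma,f for a field extension} gives $L={\rm Fix}(\sigma^3)$ (this is case (1) above); for (ii) we have $\Lambda_f=\{1\}$ and hence $L={\rm Fix}(\sigma^2)$ (case (2)). In both cases $f(t)\in F[t]\subset L[t;\sigma]$, so Theorem~\ref{L_f subalgebra of the right nucleus} presents $L[t;\sigma]/L[t;\sigma]f$ as a subalgebra of ${\rm Nuc}_r(S_f)$.

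For the dimension I would first note that $L={\rm Fix}(\sigma^c)$ is $\sigma$-stable (if $\sigma^c(u)=u$ then $\sigma^c(\sigma(u))=\sigma(u)$), so that $\sigma$ restricts to an automorphism of $L$ and $L[t;\sigma]$ is well defined, and that right division by the monic cubic $f$ makes $L[t;\sigma]/L[t;\sigma]f$ a free left $L$-module with basis $1,t,t^2$. Its dimension over $L$ is therefore $3$, and over $F$ it is $3[L:F]$. Invoking $[{\rm Fix}(\sigma^s):F]=gcd(n,s)$ from Theorem~\ref{ConditionforL=F} turns this into $3\,gcd(n,3)$ in (i) and $3\,gcd(n,2)$ in (ii), the asserted values. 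This half is essentially bookkeeping and should present no difficulty; part (ii) ends here, since $f(t)=t^3-a_1t=(t^2-a_1)t$ already has $t$ as a right factor and is reducible for trivial reasons.

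The reducibility assertion in (i) is the step I expect to be the main obstacle. The natural line is a dimension comparison: assuming $f$ irreducible and recalling that $f$ is not right invariant (the standing hypothesis of the section; note $t^3-a_0$ with $a_0\in F^\times$ is right invariant exactly when $\sigma^3={\rm id}$, i.e. $n=3$, so this case is genuinely excluded), Corollary~\ref{thm:main3} forces ${\rm Nuc}_r(S_f)\cong F[t]/(f(t))$ of $F$-dimension $3$, and a subalgebra of $F$-dimension $3\,gcd(n,3)$ then contradicts this as soon as $gcd(n,3)>1$. The delicate point is that under the stated hypothesis ``$n$ prime or $3\nmid n$'' one has $gcd(n,3)=1$, so the two dimensions agree and the bare count is not by itself decisive; the reducibility must instead be extracted from the internal structure of the embedded subalgebra. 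Concretely, since here $L={\rm Fix}(\sigma^3)=F$ the subalgebra is $F[t]/(t^3-a_0)$, and any zero divisor it contains is a zero divisor of ${\rm Nuc}_r(S_f)$, which by the division-algebra criterion for irreducibility forces $f$ to be reducible. Carrying this implication through, and identifying precisely when $F[t]/(t^3-a_0)$ fails to be a field, is where I expect the real work to lie.
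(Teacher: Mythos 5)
Your proof of everything actually displayed in the proposition --- the embedding and the dimension count --- is correct and follows the same route as the paper's own proof, which likewise just cites Theorem~\ref{L_f subalgebra of the right nucleus}, reads off ${\rm Nuc}(S_f)={\rm Fix}(\sigma^3)$ resp.\ ${\rm Fix}(\sigma^2)$ from the case list (1)--(2), and concludes ``looking at the dimensions''. Your supplementary details (the $\sigma$-stability of $L$, the left $L$-basis $1,t,t^2$, the use of $[{\rm Fix}(\sigma^s):F]=\gcd(n,s)$, and the trivial factorization $t^3-a_1t=(t^2-a_1)t$ in (ii)) are all sound.

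The gap you flagged in the ``in particular'' of (i) is genuine, and your diagnosis is exactly right: it cannot be closed, because the claim is false as stated. As you observe, the standing hypothesis of the section (not right invariant) excludes $n=3$, so ``$n$ prime or $3\nmid n$'' forces $\gcd(n,3)=1$, $L={\rm Fix}(\sigma^3)=F$, and the subalgebra is $F[t]/(t^3-a_0)$ of dimension $3$ --- exactly the dimension of ${\rm Nuc}_r(S_f)$ for irreducible $f$ by Corollary~\ref{thm:main3}, so the count is vacuous; it could only bite when $3\mid n$ with $n$ composite, precisely the cases the hypothesis excludes (and even there Theorem~\ref{thm:main2} allows $[{\rm Nuc}_r(S_f):F]=9$, so no contradiction). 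Your fallback via zero divisors only yields: if $t^3-a_0$ is reducible in $F[t]$, then $f$ is reducible in $K[t;\sigma]$ --- which is trivial anyway, since $F[t]$ is a commutative subring of $K[t;\sigma]$. When $t^3-a_0$ is irreducible in $F[t]$, $f$ can genuinely be irreducible in $K[t;\sigma]$: take $F=\mathbb{Q}$, $K=\mathbb{Q}(i)$, $n=2$ (prime, and not divisible by $3$), $f=t^3-2$. A monic right factor $t-c$ requires $\sigma^2(c)\sigma(c)c=N_{K/F}(c)\,c=2$, hence $c\in\mathbb{Q}$ and $c^3=2$; a monic left factor, $t^3-2=(t-b)(t^2+c_1t+c_0)$, forces $b=\sigma(c_1)$, $c_0=N_{K/F}(c_1)$, and $\sigma(c_1)N_{K/F}(c_1)=2$, hence $c_1\in\mathbb{Q}$ and $c_1^3=2$. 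Neither is possible, so this $f$ is irreducible though not right invariant; consistently, ${\rm Nuc}_r(S_f)\cong \mathbb{Q}[t]/(t^3-2)$ is a field. (The $m=2$ analogue in Section~\ref{Example m=2} has the same defect: $t^2-2$ over a cubic cyclic $K/\mathbb{Q}$ is irreducible, since $c\sigma(c)=2$ would give $N_{K/\mathbb{Q}}(c)^2=8$.)

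So you should not try to ``carry the implication through'': the most this method can deliver under the stated hypotheses is the equivalence implicit in Corollary~\ref{thm:main3} and the corollary following Theorem~\ref{L_f subalgebra of the right nucleus}, namely that $f=t^3-a_0$ is reducible in $K[t;\sigma]$ if and only if $t^3-a_0$ is reducible in $F[t]$. Your proposal stops exactly where the paper's one-line dimension argument breaks down; record the corrected statement rather than the paper's unconditional reducibility claim.
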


\begin{proof}
By  Proposition \ref{L_f subalgebra of the right nucleus}, $N[t;\sigma]/N[t;\sigma]f(t)\subset {\rm Nuc}_r(S_f)$.
\\ (i)  If $f(t)=t^3-a_0 \in F[t]$ with $a_0 \neq 0$, then $N = {\rm Fix}(\sigma^3)$  which proves the assertion looking at the dimensions.
 \\ (ii) If $f(t)=t^3-a_1t \in F[t]$ with $a_1 \neq 0$, then $N = {\rm Fix}(\sigma^2)$.
\end{proof}


\subsection{$m=4$} \label{Example m=4}


Let $f(t) \in R$ be of degree $4$, then
 $N = \bigcap\limits_{\lambda_j \in \Lambda} {\rm Fix}(\sigma^{4-\lambda_j})  $.

\begin{enumerate}
\item If $f(t)=t^4-a_0$ with $a_0 \in K^\times$ then  $N = {\rm Fix}(\sigma^4)$.
\item If $f(t)=t^4-a_1t$ with $a_1 \in K^\times$ then   $N = {\rm Fix}(\sigma^3) $.
\item If $f(t)=t^4-a_2t^2$ with $a_2 \in K^\times$ then  $N = {\rm Fix}(\sigma^2) $.
\item If $f(t)=t^4-a_2t^2-a_0$ with $a_0,a_2 \in K^\times$, then  $N  = {\rm Fix}(\sigma^4) \cap {\rm Fix}(\sigma^2) = {\rm Fix}(\sigma^2)$.
\item In all other cases, $N = {\rm Fix}(\sigma) = F $.
\end{enumerate}

Observe that:
\begin{itemize}
\item If $n \equiv 0 (mod \text{ } 4)$, then $[{\rm Fix}(\sigma^4):F]=4$.
\item If $n \equiv 1 \text{ or } 3 (mod \text{ } 4)$, then ${\rm Fix}(\sigma^4)=F$.
\item If $n \equiv 2 (mod \text{ } 4)$, then $[{\rm Fix}(\sigma^4):F] =2$.
\item If $n \equiv 0 (mod \text{ } 3)$, then $[{\rm Fix}(\sigma^3):F]=3$.
\item If $n \equiv 1 \text{ or } 2 (mod \text{ } 3)$, then ${\rm Fix}(\sigma^3)=F$.
\item If $n \equiv 0 (mod \text{ } 2)$ then $[{\rm Fix}(\sigma^2):F] = 2$.
\item If $n \equiv 1 (mod \text{ } 2)$ then ${\rm Fix}(\sigma^2)=F$.
\end{itemize}

\begin{proposition}
(i) If $f(t)=t^4-a_0\in F[t]$ with $0\not=a_0 $, then
$${\rm Fix}(\sigma^4)[t;\sigma]/{\rm Fix}(\sigma^4)[t;\sigma]f$$
is a subalgebra of ${\rm Nuc}_r(S_f)$
of dimension ${\rm gcd}(n,4)$ over $F$.
In particular:
\\ (a) If $f$ is irreducible and either $n\not=2$ is prime or $gcd(n,4)=1$,
then
$${\rm Nuc}_r(S_f) \cong {\rm Fix}(\sigma^4)[t;\sigma]/{\rm Fix}(\sigma^4)[t;\sigma]f.$$
 (b) If   $n=2$, then $f$ is reducible.
\\ (ii) If $f(t)=t^4-a_1t\in F[t;\sigma]$ with $0\not= a_1$, then
$${\rm Fix}(\sigma^3)[t;\sigma]/{\rm Fix}(\sigma^3)[t;\sigma]f$$
is a subalgebra
of ${\rm Nuc}_r(S_f)$ of dimension $4 {\rm gcd}(n,3)$ over $F$.
\\ (iii) If $f(t)=t^4-a_2t-a_0\in F[t;\sigma]$ with $0\not= a_2 $, then
$${\rm Fix}(\sigma^2)[t;\sigma]/{\rm Fix}(\sigma^2)[t;\sigma]f$$
is a subalgebra of ${\rm Nuc}_r(S_f)$ of dimension $4 {\rm gcd}(n,2)$ over $F$.
In particular:
\\ (a) If $f$ is irreducible,  and either $n\not=2$ is prime or ${\rm gcd}(n,4)=1$,
then
$${\rm Nuc}_r(S_f) \cong {\rm Fix}(\sigma^2)[t;\sigma]/{\rm Fix}(\sigma^2)[t;\sigma]f.$$
 (b) If  $n=2$, then $f$ is reducible.
\end{proposition}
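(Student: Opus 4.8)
The plan is to build on Theorem \ref{L_f subalgebra of the right nucleus} and the nucleus computations of Section \ref{sec:elements in N} exactly as in the preceding cases $m=2,3$, treating each displayed statement of the proposition as a dimension comparison. For part (i) the key observation is that $f(t)=t^4-a_0$ has nonzero coefficients only at indices in $\Lambda_f=\{0,4\}$ wait — more precisely the only nonzero lower coefficient is $a_0$, so by the case analysis already tabulated, ${\rm Nuc}(S_f)={\rm Fix}(\sigma^4)$. Since $f\in F[t]$, Theorem \ref{L_f subalgebra of the right nucleus} gives that ${\rm Fix}(\sigma^4)[t;\sigma]/{\rm Fix}(\sigma^4)[t;\sigma]f$ is a subalgebra of ${\rm Nuc}_r(S_f)$, and its $F$-dimension is $\deg(f)\cdot[{\rm Fix}(\sigma^4):F]=4\,gcd(n,4)/\,$; I must be careful here, since $[{\rm Fix}(\sigma^4):F]=gcd(n,4)$ and the subalgebra has $F$-dimension $m[{\rm Nuc}(S_f):F]=4\,gcd(n,4)$ — so the ``dimension $gcd(n,4)$ over $F$'' in the statement should read $4\,gcd(n,4)$, and I would verify which is intended before writing the final line.

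For the irreducibility conclusions in (i)(a) and (iii)(a), I would invoke Corollary \ref{thm:main3}: when $f\in F[t]$ is irreducible and not right invariant with $n$ prime ($n\neq2$) or $gcd(m,n)=1$, the right nucleus is exactly $F[t]/(f(t))$ of dimension $m=4$ over $F$. But the displayed subalgebra ${\rm Fix}(\sigma^4)[t;\sigma]/{\rm Fix}(\sigma^4)[t;\sigma]f$ sits inside ${\rm Nuc}_r(S_f)$ and already contains $F[t]/(f(t))$; under the hypothesis $gcd(n,4)=1$ one has ${\rm Fix}(\sigma^4)=F$, so the subalgebra coincides with $F[t]/(f(t))$ and therefore with the whole right nucleus, giving the asserted isomorphism. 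I would spell out that the hypothesis $n\neq2$ prime or $gcd(n,4)=1$ forces $gcd(n,4)=1$, so ${\rm Fix}(\sigma^4)=F$ in both cases.

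For the reducibility claims (i)(b) and (iii)(b), where $n=2$, the argument is the dimension contradiction used throughout: if $f$ were irreducible then by Theorem \ref{thm:main2}(ii) (with $n=2$ prime) the right nucleus would have $F$-dimension $m=4$, whereas the subalgebra forced by $t^2\in{\rm Nuc}_r(S_f)$ has dimension $4\,gcd(2,4)=4\cdot2=8>4$, an impossibility; hence $f$ is reducible. Parts (ii) and (iii) (outside the irreducible subcases) are pure dimension statements proved identically to (i), using ${\rm Nuc}(S_f)={\rm Fix}(\sigma^3)$ and ${\rm Fix}(\sigma^2)$ respectively from items (2) and (4) of the $m=4$ case analysis, together with $[{\rm Fix}(\sigma^3):F]=gcd(n,3)$ and $[{\rm Fix}(\sigma^2):F]=gcd(n,2)$.

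The main obstacle I anticipate is not a deep one but a bookkeeping one: reconciling the stated dimensions ($gcd(n,4)$ in (i) versus $4\,gcd(n,2)$ in (iii)) with the uniform formula $m[{\rm Nuc}(S_f):F]=4[{\rm Nuc}(S_f):F]$. I would recompute each dimension from the tabulated values of $[{\rm Fix}(\sigma^j):F]$ and make sure the factor of $m=4$ is present consistently, correcting the statement of (i) if needed, before asserting the strict inequalities that drive the reducibility conclusions. Everything else reduces to citing Theorem \ref{L_f subalgebra of the right nucleus}, Theorem \ref{thm:main2}, and Corollary \ref{thm:main3} and comparing $F$-vector space dimensions.
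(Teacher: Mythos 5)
Your proposal is correct and takes essentially the same route as the paper: the subalgebra claims follow from Theorem \ref{L_f subalgebra of the right nucleus} together with the tabulated values ${\rm Nuc}(S_f)={\rm Fix}(\sigma^4)$, ${\rm Fix}(\sigma^3)$, ${\rm Fix}(\sigma^2)$ in cases (1)--(4), and the (a)/(b) conclusions (which the paper's proof leaves implicit, just as in the $m=2,3$ subsections) are exactly the dimension comparisons via Corollary \ref{thm:main3} and Theorem \ref{thm:main2}(ii) that you describe. You are also right that the dimension stated in (i) is a typo and should read $4\,gcd(n,4)=m[{\rm Nuc}(S_f):F]$, consistent with parts (ii) and (iii).
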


\begin{proof}
 $N[t;\sigma]/N[t;\sigma]f(t)\subset {\rm Nuc}_r(S_f)$ by Theorem \ref{L_f subalgebra of the right nucleus}.
\\ (i) Here $N = {\rm Fix}(\sigma^4)$ by (1), and thus $$ {\rm Fix}(\sigma^4)[t;\sigma]/{\rm Fix}(\sigma^4)[t;\sigma]f\subset {\rm Nuc}_r(S_f).$$
 (ii) We know $N = {\rm Fix}(\sigma^3)$ by (2), and hence
$$ {\rm Fix}(\sigma^3)[t;\sigma]/{\rm Fix}(\sigma^3)[t;\sigma]f\subset {\rm Nuc}_r(S_f).$$
 (iii) We have $N = {\rm Fix}(\sigma^2)$ by (3), and so
$$ {\rm Fix}(\sigma^2)[t;\sigma]/{\rm Fix}(\sigma^2)[t;\sigma]f\subset {\rm Nuc}_r(S_f).$$
\end{proof}

%
%

\section{A small algorithm to check if $f$ is reducible}\label{sec:conclusion}

Let $K/F$ be a cyclic Galois extension of degree $n$ with Galois group ${\rm Gal}(K/F)=\langle \sigma \rangle$.
We assume  that $n$ is either prime or that ${\rm gcd}(m,n)=1$ to simplify the process.
For some skew polynomials $f(t) = t^m-\sum_{i=0}^{m-1}a_it^i\in R$ 
which are not right invariant, we can decide if they are reducible based on the following ``algorithm'' with output \fbox{TRUE} if $f$ is reducible and \fbox{STOP} if we cannot decide:
\begin{enumerate}
\item Check if $f\in F[t]$.
If $f$ is reducible in $F[t]$, then $f$ is reducible in $R$ \fbox{TRUE}.
If $f\not \in F[t]$ then go to (2).
\item Compute $N={\rm Fix}(\sigma^d)$, where $d = {\rm gcd}(m-\lambda_1,m-\lambda_2,\dots,m-\lambda_r,n)$ as per Theorem \ref{ConditionforL=F}.
\\ If $[N:F]>m$, then $f$ is reducible \fbox{TRUE}.
\\ If $[N:F]\leq m$ then go to (3).
\item Find the smallest integer $c$, such that $a_i\in {\rm Fix}(\sigma^c)$ for all $i$, and where ${\rm Fix}(\sigma^c)$
is a proper subfield of $K$.
\\ If ${\rm Fix}(\sigma^c)=N$ then $f$ is reducible  \fbox{TRUE}.
\\ If $m=qc$ and $[N:F]>c$, then $f$ is reducible  \fbox{TRUE}.
\\  If $m = qc + r$ with $0 < r < c$, and $[N:F]\geq c$
then $f$ is reducible \fbox{TRUE}.
\\ In all other cases, go to (4).
\item If all $a_i$ are not contained in a proper subfield of $K$, then we \emph{cannot decide} if $f$ is reducible \fbox{STOP}.
\end{enumerate}

Furthermore, if $f(t) \in F[t]$ then we can use the fact that
$ N[t;\sigma]/N[t;\sigma]f$
is a subalgebra of ${\rm Nuc}_r(S_f)$ to look for zero divisors in ${\rm Nuc}_r(S_f)$ in order to factor $f$.



\begin{thebibliography}{1}


\bibitem{Am2} A. S. Amitsur, \emph{Non-commutative cyclic fields.}
Duke Math. J. 21 (1954), 87-105.


\bibitem{Am} A. S. Amitsur, \emph{Differential Polynomials and Division Algebras.}
 Annals of Mathematics, Vol. 59  (2) (1954) 245-278.

 \bibitem{Am3} A. S. Amitsur, \emph{Generic splitting  fields of central simple algebras.}
Ann. of Math.  62 (2) (1955), 8-43.


\bibitem{brown2018nonassociative} C. Brown, S. Pumpl\"un,
  \emph{How a nonassociative algebra reflects the properties of a skew polynomial.}
  Glasgow Math. J. 63 (2021) (1), 6-26.
\\  \verb#https://doi.org/10.1017/S0017089519000478#

\bibitem{brown2018} C. Brown
  \emph{Petit algebras and their automorphisms}, PhD Thesis, University of Nottingham,
  2018. Online at arXiv:1806.00822 [math.RA]

\bibitem{D06} L. E. Dickson, \emph{Linear algebras in which division is always uniquely possible}.
  Trans. Amer. Math. Soc. 7 (3) (1906), 370-390.

\bibitem{giesbrecht1998factoring} M. Giesbrecht, \emph{Factoring in skew-polynomial rings over finite fields.}
 J. Symbolic Comput. 26 (4) (1998),  463-486.

\bibitem{GZ} M. Giesbrecht, Y. Zhang, \emph{Factoring and decomposing Ore polynomials
 over $\mathbb{F}_q(t)$.}
 Proceedings of the 2003 International Symposium on Symbolic and Algebraic Computation, 127-134, ACM, New York, 2003.

\bibitem{G20} J. G\`{o}mez-Torrecillas, P. Kutas, F. J. Lobillo, G. Navarro, \emph{Primitive idempotents in central simple algebras over $\mathbb{F}_q(t)$ with an application to coding theory.} Online at 	arXiv:2006.12116 [math.RA]


\bibitem{GLN13} J. G\`{o}mez-Torrecillas,  F. J. Lobillo,; G. Navarro, \emph{Computing the bound of an Ore polynomial.
 Applications to factorization.} J. Symbolic Comput. 92 (2019), 269-297. 

\bibitem{GLN} J. G\`{o}mez-Torrecillas, F. J. Lobillo, G. Navarro,
    \emph{Factoring Ore polynomials over $\mathbb{F}_q(t)$ is difficult.} Online at arXiv:1505.07252[math.RA]

\bibitem{G} J. G\`{o}mez-Torrecillas, \emph{Basic module theory over non-commutative rings with computational aspects
of operator algebras. With an appendix by V. Levandovskyy.} Lecture Notes in Comput. Sci. 8372,
    Algebraic and algorithmic aspects of differential and integral operators,  Springer, Heidelberg (2014) 23-82.

\bibitem{J96} N.~Jacobson, ``Finite-dimensional division algebras over fields.'' Springer
Verlag, Berlin-Heidelberg-New York, 1996.

\bibitem{jacobson1943theory} N.~Jacobson, ``The theory of rings."
 American Mathematical Soc., 1943



\bibitem{LS} M.~Lavrauw, J.~Sheekey, \emph{Semifields from skew-polynomial rings}. Adv. Geom. 13 (4) (2013), 583-604.

\bibitem{AO} A. Owen, \emph{On the eigenspaces of certain classes of skew polynomials}. PhD Thesis, University of Nottingham, 2022.

\bibitem{petit1966certains} J.-C.~Petit, \emph{Sur certains quasi-corps g\'{e}n\'{e}ralisant un type d'anneau-quotient}.
 S\'{e}minaire Dubriel. Alg\`{e}bre et th\'{e}orie des nombres 20 (1966-67), 1-18.


\bibitem{P68} J.-C.~Petit, \emph{Sur les quasi-corps distributifes \`{a} base momog\`{e}ne}.
C. R. Acad. Sc. Paris  266 (1968), S\'{e}rie A, 402-404.


\bibitem{Sch} R. D. Schafer, ``An Introduction to Nonassociative Algebras.'' Dover Publ., Inc., New York, 1995.

\bibitem{sheekey2018new}  J.~Sheekey
\emph{New semifields and new MRD codes from skew polynomial rings}, September 2019 Journal of the LMS,
DOI: 10.1112/jlms.12281

\bibitem{sullivan2004} T. J.~Sullivan, C. Hajarnavis, \emph{Rings and Modules}, Lecture Notes 2004,
online at \verb#http://www.tjsullivan.org.uk/pdf/MA377_Rings_and_Modules.pdf#

\bibitem{TP21} D. Thompson, S. Pumpl\"un, \emph{The norm of a skew polynomial},
J. Algebra and Representation Theory,
\verb#https://doi.org/10.1007/s10468-021-10051-z#


\end{thebibliography}
\end{document}